\documentclass[11pt,reqno]{amsproc}

\usepackage[utf8]{inputenc}
\usepackage[T1]{fontenc}
\usepackage[english]{babel}

\usepackage{lineno}
\modulolinenumbers[5]

\usepackage{geometry}
\usepackage{amsmath}
\usepackage{amssymb}
\usepackage{amsfonts}
\usepackage{mathrsfs}
\usepackage{amsthm}
\usepackage{mathtools}
\usepackage{cases}
\usepackage{bm}
\usepackage{bbm}

\usepackage[table,xcdraw,dvipsnames]{xcolor}
\usepackage{graphicx}
\setkeys{Gin}{draft=false}
\graphicspath{{figure/}}

\usepackage{float}
\usepackage{placeins}

\usepackage{subfig}

\usepackage{booktabs}
\usepackage{multirow}
\usepackage{threeparttable}
\usepackage{tabularx}
\usepackage{array}
\usepackage{longtable}
\usepackage{siunitx}

\usepackage{enumerate}
\usepackage[inline]{enumitem}
\usepackage{cite}
\usepackage{datetime}

\usepackage{tikz}
\usetikzlibrary{shapes.geometric, arrows, calc, positioning}

\usepackage{listings}
\usepackage{tcolorbox}
\tcbuselibrary{skins, breakable}

\usepackage{algorithm}
\usepackage{algpseudocode}

\usepackage[colorlinks=true,allcolors=blue]{hyperref}

\newtheorem{definition}{Definition}[section]
\newtheorem{lemma}[definition]{Lemma}
\newtheorem{theorem}[definition]{Theorem}
\newtheorem{proposition}[definition]{Proposition}
\newtheorem{remark}[definition]{Remark}

\newtheorem{example}{Example}[section]

\numberwithin{equation}{section}
\numberwithin{figure}{section}
\allowdisplaybreaks[4]

\tikzset{
  graph node/.style={
    circle, draw=black!80, fill=white,
    inner sep=0pt, minimum size=5pt,
    line width=0.5pt
  },
  graph edge/.style={
    draw=gray!40, thin
  },
  mst edge/.style={
    draw=red!80!black, very thick
  }
}

\begin{document}

\title[Optimal Transport On Graphs]{
Newton's Method for Optimal Transport Problem on Graphs}

\subjclass[2010]{34B45, 49M15, 34A55, 65K10, 65L12, 34A55}
\author{Qujiangxue Chen}
\address{Department of Applied Mathematics, The Hong Kong Polytechnic University, Kowloon, Hong Kong SAR, P.R. China}
\email{23036943R@connect.polyu.hk}

\author{Jianbo Cui}
\address{Department of Applied Mathematics, The Hong Kong Polytechnic University, Kowloon, Hong Kong SAR, P.R. China}
\email{jianbo.cui@polyu.edu.hk}

\author{Dieci Luca}
\address{School of Mathematics, Georgia Institute of Technology, Atlanta, GA 30332 USA}
\email{luca.dieci@math.gatech.edu}

\author{Haomin Zhou}
\address{School of Mathematics, Georgia Institute of Technology, Atlanta, GA 30332 USA}
\email{hmzhou@math.gatech.edu}

\thanks{The research is partially supported by NSFC grant 12522119, NSFC grant 12301526, MOST National Key \& Program No. 2024YFA1015900, the Hong Kong Research Grant Council GRF grant 15302823, NSFC/RGC Joint Research Scheme N\_PolyU5141/24, the internal grants (P0045336, P0046811) from the Hong Kong Polytechnic University, and the CAS AMSS-PolyU Joint Laboratory of Applied Mathematics. Zhou is partially supported by NSF grants DMS-2307465 and DMS-2510829.}

\keywords{Dynamic optimal transport, Connected graph, Boundary value problem, Newton's method}

\begin{abstract}
In this paper, we study dynamical optimal transport on a connected graph from the perspective of the Benamou–Brenier formulation, where densities are assigned to vertices and velocities to edges. 
But, directly using Newton’s method on the resulting nonlinear systems encounters two potential difficulties: (i) If the graph contains cycles, edge variables are not unique, and (ii) there is no guarantee that the density variables remain positive. To address these challenges, we introduce a finite difference-type  Newton method that eliminates cycle-induced redundancies through a spanning-tree gauge, resulting in a reduced set of independent variables and a well-posed, sparse linear system. For the lattice graph arising from the continuous OT problem, density positivity can also be  guaranteed by using an upwind discretization subject to a CFL-type condition. We further demonstrate the versatility of the proposed scheme by applying it to a range of problems, including optimal transport on lattices and random graphs, inverse optimal transport problems, and social network analysis.
\end{abstract}

\maketitle
\section{Introduction}
Optimal transport (OT) provides a fundamental geometric framework for comparing probability distributions and has become a central tool in a wide range of scientific disciplines, including image processing, machine learning, and network analysis \cite{peyre2019computational, villani2009optimal}. 
Originating from the mass transfer problem proposed by Monge \cite{Monge1781Deblais} in 1781 and later relaxed by Kantorovich \cite{kantorovich1942translocation}, OT has undergone substantial theoretical and computational developments.
A key milestone was the dynamical formulation introduced by Benamou and Brenier \cite{benamou2000computational}, which reformulates the static transport problem as a fluid dynamics problem by minimizing the kinetic energy of a time-dependent density subject to a continuity equation. 
In many modern applications, the objects being compared are inherently discrete, such as empirical measures supported on finitely many samples, pixels, particles, or states. In these settings, node-wise distributions of mass or activity evolve over networks and are shaped by the underlying network connectivity, which restricts how mass may be redistributed across nodes, as commonly encountered in social and brain network analysis \cite{Fornito2016BrainNetworks,Newman2013NetworkScience}. Such structural constraints are naturally and efficiently encoded by weighted graphs, where edges specify admissible interactions and weights quantify their strength.
This graph-based perspective leads to OT on graphs \cite{maas2011gradient}, which compares and transports probability measures supported on nodes while respecting the underlying connectivity and edge weights.

In this work, we focus on the computation of the dynamical optimal transport (OT) problem \eqref{eq:graph-bb}
on a finite, connected, simple graph $G=(V,E,\Omega)$ to capture the interpolation trajectories (geodesics) between graph signals.
Here, $V=\{1 \le i\le N\}$ denotes the vertex (or node) set, 
$E$ denotes the edge set, and
$\Omega \in \mathbb{R}^{N \times N}$ is a symmetric weight matrix:
$\Omega=\{\omega_{jl}\}_{j,l=1}^N$ such that $\omega_{jl}=\omega_{lj}>0$  if $(j,l)\in E$, $\omega_{jj}=0$, and $\omega_{jl}=\omega_{lj}=0$ otherwise. 
The dynamical OT problem consists in minimizing the following 
functional
\begin{equation}\label{eq:graph-bb}
\inf_{\rho,\,v}\left\{\int_{0}^{1} \mathcal{K}(\rho(t),v(t))\,dt \;:\;
\partial_t \rho + \operatorname{div}^{\theta}_{G}\!\big(\rho\, v\big)=0,\;
\rho(0)=\mu,\;
\rho(1)=\nu \right\},
\end{equation}
where the discrete divergence operator  $\operatorname{div}^{\theta}_{G}$ is defined in \eqref{the def of divergence}.
The Lagrangian cost functional $\mathcal{K}$ is typically given by the weighted inner product $\langle\cdot,\cdot\rangle_{\theta(\rho)}$,  i.e., $\mathcal{K}(\rho,v)=\langle v, v\rangle_{\theta(\rho)}$, which  relies on a density-dependent weight function $\theta(\rho)$; see Section~\ref{operators on graph}.  The OT problem on graph \eqref{eq:graph-bb} aims to seek a Wasserstein geodesic on discrete graph \cite{CHLZ12,MR4405488, doi:10.1137/21M142160X}, by finding the optimal 
time-dependent density $\rho(t)$ and optimal flux $v(t)$ that connect two probability measures $\mu$ and $\nu$ by minimizing the averaged Lagrangian cost functional.

The dynamical formulation on graphs \eqref{eq:graph-bb} is closely related to the continuous Benamou--Brenier formulation of optimal transport. This relation is particularly transparent when the graph represents a spatial discretization of an underlying domain: nodal masses approximate continuous densities, while edge fluxes act as discrete analogues of momentum variables. Several discrete OT constructions make this connection explicit and analyze their relation with the continuous Wasserstein geometry through discrete Benamou--Brenier formulations and scaling-limit results \cite{connectionwithcontinuous}.
From a computational viewpoint, this parallel also explains why numerical methods for discrete and graph-based OT often build on techniques first developed for the continuous problem, including proximal splitting schemes \cite{Papadakis2014OptimalTransport} and entropic regularization \cite{cuturi2013sinkhorn}.

Within the discrete setting, an extensive body of work has investigated transport problems on networks. In particular, in the absence of edge-capacity constraints, graph optimal transport is closely related to the classical minimum-cost flow problem and admits a linear programming formulation. Classical algorithms for this setting are surveyed in \cite{AhujaMagnantiOrlin1993NetworkFlows}, including cycle-canceling methods, network simplex methods, and Ford--Fulkerson-type approaches. More recently, discrete geometric frameworks for transport-type metrics on graphs have been developed, most notably by Maas \cite{maas2011gradient} and Mielke \cite{mielke2011formulation}. 
To characterize the dynamical evolution of the density trajectory and velocity field for OT problems on graphs, Chow et al. \cite{CHLZ12, li2018computational} introduced a discrete analogue of the Benamou--Brenier dynamical formulation on graphs, representing transport paths via vertex-based densities coupled with edge-based fluxes through a discrete continuity equation.

Yet, numerical methods for dynamical OT on graphs are still relatively underdeveloped. 
A recent work
\cite{doi:10.1137/21M142160X} reformulates the problem as a two-point boundary value problem and applies multiple shooting with Newton's method, but this strategy entails inherent trade-offs. Multiple shooting is attractive because it stabilizes challenging boundary-value problems by decomposing them into a collection of short-time initial-value problems. However, this comes at the cost of enlarging the overall system and limiting the ability to exploit sparsity in the underlying structure. In addition, current methods do not systematically address the non-uniqueness arising from graph cycles and typically assume a priori that the density is positive, rather than enforcing positivity through a robust numerical mechanism.

On the other hand, leveraging a fully coupled Newton solver in the graph setting is nontrivial and introduces several algorithmic difficulties. First, the graph-based dynamical formulation yields a large-scale nonlinear system whose structure is tightly constrained by the topology of the underlying network. These constraints become especially pronounced on graphs containing cycles, where the edge-based velocity variables are no longer independent, potentially leading to a rank-deficient or ill-conditioned Jacobian. Second, while positivity of the density in the continuous setting can be enforced through characteristic-based arguments, such geometric mechanisms do not directly carry over to finite graphs. A finite graph lacks the length-space structure that underpins characteristic flows, making the preservation of density positivity a central requirement for both physical consistency and numerical stability.

To overcome these difficulties, we develop a structure-preserving Newton method tailored to graph topology.
By exploiting the graph topology through a spanning-tree parameterization, the method avoids the redundancy caused by cycles and produces a sparse linear system that can be solved efficiently at each Newton step. An upwind discretization is further used to preserve positivity of the density, which is crucial for obtaining physically meaningful numerical solutions. Furthermore, we demonstrate that solutions computed under different gauge fixings agree up to a cycle-space gauge transformation within the target numerical accuracy, and their discrepancy can be controlled by the time steps and tolerance of the Newton iteration in Remark~\ref{the discrepancy of the spanning tree}.

Beyond validating the proposed Newton solver, the numerical experiments in this work also illustrate several practical application scenarios of dynamical optimal transport on graphs. First, the computed geodesic trajectories provide a natural tool for interpolating and comparing graph signals, which is useful in data analysis tasks where observations are distributed over networked structures, such as sensor networks, traffic systems, and biological interaction networks. Second, the associated edge fluxes reveal dominant transfer pathways and transient concentration patterns, making the method relevant for identifying information propagation routes in social networks, signal transmission in brain connectivity networks, and redistribution mechanisms in infrastructure or communication networks. Third, because the formulation respects graph topology and edge weights, it can serve as a physically meaningful model for constrained transport processes on discrete structures, including load balancing, resource allocation, and diffusion-driven reconfiguration on networks. Finally, the ability to compute stable Wasserstein geodesics on general weighted graphs may also support downstream machine learning tasks on graph-structured data, such as graph signal interpolation, clustering, generative modeling, and shape or distribution comparison when the underlying geometry is inherently discrete.

The remainder of this paper is organized as follows.
In Section~\ref{Newton's Method for OT Problem on Graphs}, we review the basic operators on graphs and derive the formulation of the Wasserstein geodesic equations. 
In subsection~\ref{Newton's iteration for OT Problem on Graphs}, a spanning-tree-based gauge fixing approach is introduced to eliminate the redundancy in the velocity variables, leading to a reduced nonlinear system.
The convergence of the resulting Newton iteration is analyzed in Section~\ref{the convergence of Newton iteration}.
Several numerical experiments are presented in Section~\ref{Numerical Examples} to validate the proposed method and several applications of the proposed algorithm are also discussed.

\section{Finite-difference type Newton's iteration on graphs} 
\label{Newton's Method for OT Problem on Graphs}

In this section, we present the overall workflow for computing dynamical optimal transport on a graph.
Starting from the graph-based Benamou--Brenier formulation, we first introduce the discrete differential operators and the mobility-weighted kinetic energy that define the action functional on $G$.
We then apply a finite-difference discretization in time, which converts the continuous-in-time optimality conditions into a coupled nonlinear algebraic system for the discrete densities and edge fluxes at all time levels. To solve this system, we employ Newton’s method, whose Jacobian exhibits a sparse block structure induced by the discrete continuity constraints on the graph.

\subsection{Discrete OT problem on graphs}
\label{operators on graph}
Throughout this paper, we  consider an undirected, connected, simple graph $G=(V, E,\Omega)$; recall that a graph is simple if there are no self-loops or multiple edges (see e.g. \cite{MR2035186}). Below,  we denote
$\mathcal N(i)=\{j\in V: (i,j)\in E\}$ as the  set of neighbors of the node $i$.

Denote the set of discrete probabilities on the graph by ${\mathcal{P}}(G)$:
$$
\mathcal P(G)=\Big\{(\rho_i)_{i=1}^N\ |\, \sum_{i=1}^N \rho_i =1, \; \rho_i\ge 0 \Big\},
$$
and $\mathcal P_o(G)$ is its interior (i.e., all  $\rho_i> 0$ for $i\in V$).
A vector field $v$ on $E$ is a $N\times N$ skew-symmetric matrix, and the weighted inner product of two vector fields $u,v$ is defined by
\[
\langle u,v\rangle_{\theta(\rho)}
=
\frac{1}{2}\sum_{(j,l)\in E} u_{jl}v_{jl}\theta_{jl}(\rho).
\]
where the coefficient 1/2 is taken since every edge in $G$ is counted twice.  
Here, the weight function
\(\theta:\mathbb{R}_{\ge 0}^2 \to \mathbb{R}_{\ge 0}\) 
is defined by
\[
\theta_{ij}(\rho) := \theta(\rho_i,\rho_j), \qquad (i,j)\in E,
\]
where \(\theta\) satisfies the following properties:
\begin{enumerate}
\renewcommand{\labelenumi}{(\roman{enumi})}
\item \textbf{Maximum principle:}
      \(\min\{a,b\}\le \theta(a,b)\le \max\{a,b\}\).
\item \textbf{Positive $1$-homogeneity:}
      \(\theta(\lambda a,\lambda b)=\lambda\,\theta(a,b)\) for all \(\lambda>0\).
\end{enumerate}

The density dependent weight $\theta_{ij}(\rho)$ on the edge $(i,j)\in E$ was introduced in  \cite{maas2011gradient,CHLZ12,CLZ19b} to capture the 
long-time dynamics for Fokker-Planck equations on graphs and the dispersion relationship for the Schr\"odinger equation 
on graphs.  
The standard example satisfying the above properties is the arithmetic mean
\begin{equation*}\label{eq:theta_def}
    \theta_{jl}({\rho}) := \frac{{\rho}_j + {\rho}_l}{2},
    \qquad l \in \mathcal{N}(j) .
\end{equation*} 
For other possible choices of $\theta$, we refer to \cite{MR4405488}.

We introduce the discrete divergence of the flux function $\rho v$ as
\begin{align}
\operatorname{div}_{G}^{\theta}(\rho v)=(\sum_{j\in \mathcal N(i)}\sqrt{\omega_{ij}}v_{i,j}\theta_{ij})_{i=1}^N.
\label{the def of divergence}
\end{align}
Accordingly, the gradient operator $\nabla_G$ on the graph is given through the following dual formulation  
$$-\langle \operatorname{div}_G^{\theta}(\rho v), \widetilde{\Phi} \rangle
=
\langle v, \nabla_G \widetilde{\Phi} \rangle_{\theta(\rho)},$$
where $\widetilde \Phi$ is a function defined on every node of the graph. It can be seen that $\nabla_G$ maps a node-defined function to a vector field on the edges. 

In this paper, we consider the following optimal transport problem 
on the graph: 
\begin{align}\label{dis-ot}
W(\rho(0),\rho(1))^2
&= \inf_{v(t),\,\rho(t)}
\Biggl\{
\int_{0}^1 \langle v(t),v(t)\rangle_{\theta(\rho(t))}\,dt
:\notag\\
&\qquad
\frac{d\rho(t)}{dt}
+ \operatorname{div}_G^{\theta}\bigl(\rho(t) v(t)\bigr)=0,
\quad \rho(0)=\mu,\quad \rho(1)=\nu
\Biggr\}.
\end{align}
For simplicity, we denote 
$$\rho(t)=(\rho_i(t))_{i=1}^N,\;  v(t)=(v_{i,j}(t))_{(i,j)\in E},\; \text{for}\; t\in [0,1].$$ 
By letting $S$ such that $v_{i,j}=(S_j-S_i)\sqrt{\omega_{ij}}$ (cf. $\nabla S=v$ in \cite{MR4405488}), the critical point of \eqref{dis-ot} satisfies the discrete Wasserstein geodesic equation (see \cite{doi:10.1137/21M142160X,MR4405488})
\begin{equation}\label{dhs}\begin{split}
&\frac {d\rho_i}{d t}=\sum_{j\in \mathcal N(i)}\omega_{ij}(S_i-S_j)\theta_{ij}(\rho)=\frac {\partial H}{\partial S_i},\\
&\frac {d S_i}{dt}=-\frac 12\sum_{j\in \mathcal N(i)}\omega_{ij}  (S_i-S_j)^2 \frac {\partial \theta_{ij}(\rho)}{\partial \rho_i}=-\frac {\partial  H}{\partial \rho_i}+C(t)
\end{split}\end{equation}
with boundary conditions $\rho(0)=
\mu$ and $\rho(1)=\nu.$ 
We observe that \eqref{dhs} is a Wasserstein Hamiltonian system on the graph \cite{MR4405488}, whose phase flow preserves the 
discrete Hamiltonian 
$${H}(\rho,S)=\frac 14\sum_{i=1}^N\sum_{j\in \mathcal N(i)}|S_i-S_j|^2\omega_{ij} \theta_{ij}(\rho).$$ 
Here the potential $S_i$ induces the optimal velocity $v_{i,j} =({S_j-S_i})\sqrt{\omega_{ij}}$ for $(i,j) \in E$, and $C(t)$ is a function depending only on $t$.

\subsection{A finite-difference Newton scheme with spanning-tree reduction}
\label{Newton's iteration for OT Problem on Graphs}
In this section, we present a Newton framework for solving \eqref{dhs}.
As shown in  \cite{CLZ21u}, we can rewrite \eqref{dhs} in the equivalent  fluid dynamics formulation
\begin{equation}
\begin{aligned}
& \displaystyle\frac{d \rho_i}{dt}= -\sum_{j \in \mathcal N(i)} \displaystyle \sqrt{\omega_{ij}} v_{i,j} \theta_{ij} (\rho),       \\
& \displaystyle\frac{d v_{i,j}}{dt}= -\displaystyle\frac{1}{2} \sum_{k \in \mathcal N(j) } \sqrt{\omega_{ij}} (v_{k,j})^2 \displaystyle\frac{ \partial \theta_{jk}(\rho)}{\partial \rho_j}+\displaystyle\frac{1}{2} \sum_{k \in \mathcal N(i) } \sqrt{\omega_{ij}}  (v_{k,i})^2 \displaystyle\frac{ \partial \theta_{ik}(\rho)}{\partial \rho_i}. 
\end{aligned}
\label{equation expression}
\end{equation}  However, there is redundant information of the velocities in \eqref{equation expression} since the optimal velocity is induced by the potential, $v_{i,j} =({S_j-S_i})\sqrt{\omega_{ij}}$. Here, the number of unknown $S_i$ is at most $N-1$ instead of $(N-1)\times (N-1)$. 
To eliminate this redundancy, we make systematic use of a spanning-tree structure of the underlying undirected graph. We first recall the notion of a spanning tree.

\begin{definition}[Spanning tree]
 Let $G=(V,E,\Omega)$ be a connected undirected weighted simple graph.
A \emph{spanning tree} of $G$ is a subgraph $T=(V,E_T)$ with $E_T\subseteq E$
that is connected and acyclic and $|E_T|=|V|-1$.
\end{definition}

We note that, see \cite[Proposition~1.5.6] {Diestel}, any connected undirected graph  $G$ admits a spanning tree $T=(V,E_T)$ by fixing a edge orientation.  So, we may assume that we have a spanning tree $T$ and an orientation of its edges.
The next lemma shows that, for potential-induced velocities, every edge velocity on $G$ can be expressed as a signed combination of the tree-edge velocities along the unique tree path.

\begin{lemma}\label{lem:tree-representation}
Let $G=(V,E,\Omega)$ be a connected undirected graph and let $T=(V,E_T)$ be a spanning tree of $G$ with a fixed orientation.  
Consider the optimal  velocity variables $\{v_f\}_{f\in E_T}$ of \eqref{equation expression} on the tree.  
Then, for any edge $e=(i,j)\in E$, there exist coefficients $a_f\in\{-1,0,1\}$, $f\in E_T$, such that
\begin{equation}
    \frac 1{\sqrt{\omega_e}}\, v_e
    = \sum_{f\in E_T} a_f\, \frac 1{\sqrt{\omega_f}}\, v_f .
\end{equation}
\end{lemma}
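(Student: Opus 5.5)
The plan is to rewrite everything in terms of the potential $S$ and then telescope along the unique tree path. The key input is that optimal velocities are potential-induced: by the discussion preceding the lemma (the reduction of \eqref{dhs} to \eqref{equation expression}), every optimal velocity has the form $v_{i,j}=(S_j-S_i)\sqrt{\omega_{ij}}$ for some node potential $S$. The identity to prove then becomes
\[
\frac{1}{\sqrt{\omega_e}}v_e = S_j-S_i
\qquad\text{and}\qquad
\frac{1}{\sqrt{\omega_f}}v_f = S_{h(f)}-S_{t(f)},
\]
where $t(f)$ and $h(f)$ denote the tail and head of the tree edge $f$ under the fixed orientation. Since $v$ is skew-symmetric, the reversed edge $(h(f),t(f))$ carries $-v_f$. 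So the claim reduces to a purely combinatorial statement: the difference $S_j-S_i$ is a signed sum of tree-edge differences.

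Next, fix $e=(i,j)\in E$. Because $T$ is a spanning tree, there is a unique simple path in $T$ from $i$ to $j$,
\[
i=x_0,x_1,\dots,x_m=j,
\]
with each $\{x_{k-1},x_k\}\in E_T$; this uses connectedness and acyclicity of $T$ from the Definition. Telescoping gives
\[
S_j-S_i=\sum_{k=1}^m (S_{x_k}-S_{x_{k-1}}).
\]
For each step $k$, let $f_k$ be the tree edge joining $x_{k-1}$ and $x_k$. Set $a_{f_k}=+1$ if the fixed orientation of $f_k$ runs from $x_{k-1}$ to $x_k$, and $a_{f_k}=-1$ otherwise. Set $a_f=0$ for all tree edges not on the path. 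Then each summand equals $a_{f_k}\,v_{f_k}/\sqrt{\omega_{f_k}}$.

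The only point needing care is that the coefficients really lie in $\{-1,0,1\}$. This requires that no tree edge appears twice in the telescoping sum, which holds because the path is simple: a repeated edge would either produce a cycle in $T$ or could be cancelled to give a shorter path, contradicting uniqueness. Two special cases are consistent with this. If $e\in E_T$, the path is the single edge $e$ and $a_e=\pm1$. If $i=j$, there is nothing to prove, since $G$ is simple and has no self-loops.

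I expect the main obstacle to be the justification of the hypothesis that $v$ is a gradient, i.e. that the optimal velocities of \eqref{equation expression} on all of $G$ come from a single potential $S$. I would take this from the geodesic equation \eqref{dhs} and the relation $v_{i,j}=(S_j-S_i)\sqrt{\omega_{ij}}$ stated there, as the paper does. Alternatively, I would note that \eqref{equation expression} preserves the gradient structure: its right-hand side for $v_{i,j}$ is $\sqrt{\omega_{ij}}(\Psi_j-\Psi_i)$ with
\[
\Psi_i=\tfrac12\sum_{k\in\mathcal N(i)} v_{k,i}^2\,\partial_{\rho_i}\theta_{ik}.
\]
Hence if $v(0)$ is potential-induced, $v(t)$ stays potential-induced for all $t$.
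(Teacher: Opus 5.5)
Your proposal is correct and uses the same argument as the paper. You write $v_e/\sqrt{\omega_e}=S_j-S_i$, take the unique simple path in $T$ from $i$ to $j$, and telescope, with signs fixed by the orientation. Your handling of the zero coefficients (and of edge distinctness, which holds simply because a simple path repeats no vertex) is more careful than the paper's one-line proof.

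There is one harmless sign slip in your optional invariance remark. The right-hand side of the $v_{i,j}$ equation in \eqref{equation expression} is $\sqrt{\omega_{ij}}(\Psi_i-\Psi_j)$, not $\sqrt{\omega_{ij}}(\Psi_j-\Psi_i)$. That is the graph gradient of $-\Psi$, consistent with $\dot S_i=-\Psi_i$ from \eqref{dhs}, so your conclusion that the gradient structure is preserved still holds.
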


\begin{proof} 
 For any edge $e=(i,j)\in E$,
  there is a unique simple path in $T$ from $i$ to $j$. Using   the definition $v_{i,j}=(S_j-S_i)\sqrt{\omega_{ij}},$ we have that
  there exists coefficient $a_f\in \{-1,1\}$, such that 
\begin{align}\label{useful-express}
\frac 1{\sqrt{\omega_{e}}}v_{e}=\sum_{f\in E_T}a_f \frac 1{\sqrt{\omega_{f}}} v_f.  
\end{align}
\end{proof}

Because of the above, we will focus on solving the following reduced system
\begin{equation}
\label{equation expression1}
\begin{aligned}
\frac{d \rho_i}{dt}
&= - \sum_{j \in \mathcal N(i)} \sqrt{\omega_{ij}}\, v_{i,j}\, \theta_{ij}(\rho),
\qquad i \le N-1, \\
\frac{d v_{i,j}}{dt}
&= -\frac{1}{2} \sum_{k \in \mathcal N(j)} \sqrt{\omega_{ij}}\, (v_{k,j})^2
   \frac{\partial \theta_{jk}(\rho)}{\partial \rho_j} \\
&\quad
   + \frac{1}{2} \sum_{k \in \mathcal N(i)} \sqrt{\omega_{ij}}\, (v_{k,i})^2
   \frac{\partial \theta_{ik}(\rho)}{\partial \rho_i},
\qquad (i,j) \in E_T, \\
\frac{1}{\sqrt{\omega_e}} v_e
&= \sum_{f \in E_T} a_f \frac{1}{\sqrt{\omega_f}} v_f,
\qquad e \in E.
\end{aligned}
\end{equation}
satisfying the mass conservation $\sum\limits_{i\in V}\rho_{i}=1.$ Here, $\mathcal N(i)$ denotes the set of neighbors of node $i$ in the original graph $G=(V,E,\Omega)$, rather than the corresponding neighbor set in the spanning tree alone. Accordingly, although the evolution equations for the velocity variables are imposed only on the tree edges $(i,j)\in E_T$, all summations appearing above are taken over the full graph. In particular, for any non-tree edge $e\in E\setminus E_T$, the associated velocity $v_e$ is determined from the tree-edge velocities via the last relation.

Next, we introduce a Newton method for solving the discrete OT problem \eqref{dis-ot} through the nonlinear system associated with \eqref{equation expression1}. We first consider the exact time-integrated nonlinear system, denoted by $\tilde F$, which is obtained by integrating the continuous dynamical system \eqref{equation expression1} exactly over each time interval $[t_m,t_{m+1}]$. 
Its unknowns are the nodal samples of the continuous solution,
\[
\bar\rho_i^m := \rho_i(t_m), \qquad \bar v_f^m := v_f(t_m),
\]
for $1\le i\le N-1$, $f\in E_T$, and $1\le m\le M+1$. 
Although these unknowns are discrete in number, they represent values of the continuous solution at the time nodes. 
The evolution over each subinterval is still governed by the continuous dynamics, so that the values inside $[t_m,t_{m+1}]$ are determined by exact integration starting from the nodal data at $t_m$.

Accordingly, the exact residual $\tilde F$ is defined by
\begin{align*}
\tilde F_i^m
&:=
\bar\rho_i^{m+1}-\bar\rho_i^m
+\tilde\Phi_i\Bigl(t_m,t_{m+1},(\bar\rho_i^m)_{i\le N-1},(\bar v_f^m)_{f\in E_T}\Bigr), \qquad m\le M,\\
\tilde F_f^m
&:=
\bar v_f^{m+1}-\bar v_f^m
+\tilde\Phi_f\Bigl(t_m,t_{m+1},(\bar\rho_i^m)_{i\le N-1},(\bar v_f^m)_{f\in E_T}\Bigr), \qquad m\le M,
\end{align*}
subject to the constraint \eqref{useful-express}, the mass conservation condition $\bar\rho^m=(\bar\rho_i^m)_{i\in V}\in\mathcal P_o(G)$, and the boundary conditions $\bar\rho^1=\mu$ and $\bar\rho^{M+1}=\nu$. 
Here, with $\tau=t_{m+1}-t_m$, the exact transition terms are
\[
\tilde \Phi_i \Bigl(t_m,t_{m+1},(\bar{\rho}_i^m)_{i\le N-1},(\bar{v}_f^m)_{f\in E_T}\Bigr)
:=
\int_{t_m}^{t_{m+1}}
\left(
\sum_{j \in \mathcal N(i)} \sqrt{\omega_{ij}}\, v_{i,j}(t)\,\theta_{ij}(\bar\rho(t))
\right)\,dt,
\]
and
\begin{align*}
\tilde \Phi_f\Bigl(t_m,t_{m+1},(\bar{\rho}_i^m)_{i\le N-1},(\bar{v}_f^m)_{f\in E_T}\Bigr)
:=
&\frac12\int_{t_m}^{t_{m+1}}
\Biggl(
\sum_{k \in \mathcal N(j)} \sqrt{\omega_{ij}}\, (v_{k,j}(t))^2
\frac{\partial \theta_{jk}(\bar{\rho}(t))}{\partial \rho_j}\\
&-
\sum_{k \in \mathcal N(i)} \sqrt{\omega_{ij}}\, (v_{k,i}(t))^2
\frac{\partial \theta_{ik}(\bar{\rho}(t))}{\partial \rho_i}
\Biggr)\,dt.
\end{align*}

Since the exact transition operators $\tilde\Phi_i$ and $\tilde\Phi_f$ are generally unavailable in closed form, in practice we replace them by numerical quadrature approximations. 
This leads to the fully discrete nonlinear system $F$, whose unknowns are denoted by $\rho_i^m$ and $v_f^m$ and represent the numerical approximations to $\bar\rho_i^m$ and $\bar v_f^m$, respectively. 
For example, using the left-endpoint rule, we define
\begin{align}\label{eq:Ftilde-rho}
F_i^m&:=
\rho_i^{m+1}-\rho_i^m
+\Phi_i\Bigl(t_m,t_{m+1},(\rho_i^m)_{i\le N-1},(v_f^m)_{f\in E_T}\Bigr), \qquad m\le M,\\
\label{eq:Ftilde-v}
F_f^m&:=
v_f^{m+1}-v_f^m
+\Phi_f\Bigl(t_m,t_{m+1},(\rho_i^m)_{i\le N-1},(v_f^m)_{f\in E_T}\Bigr), \qquad m\le M,
\end{align}
again together with \eqref{useful-express}, the constraint $\rho^m=(\rho_i^m)_{i\in V}\in\mathcal P_o(G)$, and the boundary conditions $\rho^1=\mu$ and $\rho^{M+1}=\nu$. 
Newton's method is then applied to solve this computable residual system $F=0$.

In this work, for $\Phi_i$ and $\Phi_f$ in \eqref{eq:Ftilde-rho}-\eqref{eq:Ftilde-v},
we use the left-rectangular rule, i.e.
\begin{align}\label{LRR}
   \Phi_i &= 
        \sum_{j \in \mathcal N(i)} \displaystyle \sqrt{\omega_{ij}} v_{i,j}^m \theta_{ij} (\rho^m)\tau,
 \\
    \Phi_f &=\displaystyle\frac{1}{2} \sqrt{\omega_{ij}} \tau \Big[ \sum_{k \in \mathcal N(j) }  (v_{k,j}^m)^2 \displaystyle\frac{ \partial \theta_{jk}(\rho^m)}{\partial \rho_j}-\sum_{k \in \mathcal N(i) }   (v_{k,i}^m)^2 \displaystyle\frac{ \partial \theta_{ik}(\rho^m)}{\partial \rho_i}\Big], (i,j)\in E_T. \label{phi_f}
\end{align}

For convenience,  we rewrite the 
function \eqref{eq:Ftilde-rho}-\eqref{eq:Ftilde-v} in a compact form 
{\small
\begin{equation} 
 \mathbf F_{x} =
\begin{pmatrix}
\mathbf {F}_{\hat \rho} \\[4pt]
\mathbf {F}_{{\hat v}_T}
\end{pmatrix}
\approx 
\begin{pmatrix}
0 \\[4pt]
0    
\end{pmatrix}, \;x={(\hat \rho,{\hat v_T})}:=((\rho^{m}_i)_{2\le m \le M,i\le N-1}, (v^{m}_f)_{m\le M+1,f\in E_T}), 
\label{the def of full system tilde F}
\end{equation}}
where
\begin{equation}\label{eq:Ftilde-rho1}
\mathbf {F}_{\hat \rho}
:=
\big[
    (\mathbf {F}_{i}^1)_{i\le N-1},
    \ldots,
    ( \mathbf {F}_{i}^M)_{i\le N-1}
\big]^{\top},
\end{equation}
and
\begin{equation}\label{eq:Ftilde-v1}\mathbf {F}_{{\hat v}_T}
:=
\big[(\mathbf {F}_f^1)_{f\in E_T}, \ldots, ({\mathbf {F}}_f^{M})_{f \in E_T} \big]^\top.
 \end{equation}
We also denote the value function of the exact solution by $\widetilde {\bf F}_x$ when $\Phi_i$ and $\Phi_f$ are the exact integrals. Note that for the continuous OT problem, the exact solution of $\widetilde {\bf F}$ is   $\bar{x}=((\bar {\rho}_i^m)_{2\le m\le M,i\le N-1} $, $(\bar {v}_f^m)_{m\le M+1,f\in E_T}),$ at the temporal grids. Here {$((\bar {\rho}_i(t))_{i\le N-1},(\bar {v}_f(t))_{f\in E_T})$}, $t\in [0,1],$ is the exact solution of \eqref{equation expression1}. 

By formulating the Newton iteration method for solving the nonlinear system, we have 
{\begin{equation}
    x^{(k+1)} = x^{(k)} - \left( J_{\bf{F}}(x^{(k)}) \right)^{-1} {\bf{F}}(x^{(k)}),
    \label{Newton iteration}
\end{equation}where \( x^{(k)}=\big((\rho^m_i)^{(k)}_{2\le m\le M,i\le N-1}, (v_f^m)_{m\le M+1,f\in E_T}^{(k)}\big)\) is the current approximation at the \(k\)-th iteration, \( J_{\bf F}(x^{(k)}) \) (also denoted by $J^{(k)}$ for simplicity) is the Jacobian matrix 
evaluated at the \(k\)-th iterate. }
In our experiments, we enforce the following stopping criterion for Newton's iteration: 
\begin{equation}\label{NewtStops}
\|{\bf F}(x^{(k)})\|< {\rm tolerance} : = \varepsilon.
\end{equation}

\begin{algorithm}
\caption{Optimal Transport Solver on Graph}\label{alg:OT-solver}
\begin{algorithmic}[1]
    \State \textbf{Input:} an undirected, connected graph $G=(V,E,\Omega)$, $|V|=N$;
    time grid points $\{t_m\}_{m=1}^{M+1}$ with $t_1=0$, $t_{M+1}=1$;
    initial distribution $\mu$, target distribution $\nu$;
    max-number of Newton iterations $\mathrm{Maxits}$;
    numerical integration $(\Phi_i,\Phi_f)_{i\le N-1,\;f\in E_T}$.

    \State Find a spanning tree $(V,E_T)$ of the graph $G$.
    
    \State Give an initial guess
    $x^{(0)}=((\hat{\rho})^{(0)},(\hat{v}_f)^{(0)})$.

    \For{$k=1,\ldots,\mathrm{Maxits}$}
        \If{\eqref{NewtStops} is satisfied}
            \State stop
        \EndIf
        \State Solve $J^{(k)}d^{(k)}=-\mathbf{F}(x^{(k)})$, and set 
        $x^{(k+1)}=x^{(k)}+d^{(k)}$.
    \EndFor
    
    \State Output of Newton's iteration
    \[
    \bar{x}
    =
    ((\bar {\rho}_i^m)_{2\le m\le M,\;i\le N-1},
    (\bar {v}_f^m)_{m\le M+1,\;f\in E_T}).
    \]
    
    \State Recover the velocities on all edges:
    for each $e \in E$, compute
    \[
    \bar{v}^m_e
    =
    \sqrt{\omega_e}
    \sum_{f\in E_T} a_{f}
    \frac{1}{\sqrt{\omega_f}}\,\bar{v}^m_f, 
    \qquad m=1,\dots,M+1.
    \]

    \State \textbf{Output:} the discrete optimal transport solution, consisting of
    the nodal densities
    $\{(\bar{\rho}^m_i)\}_{i\in V,\;m=1,\dots,M+1}$
    and the recovered edge velocities
    $\{(\bar{v}^m_e)\}_{e\in E,\;m=1,\dots,M+1}$.
\end{algorithmic}
\end{algorithm}

\begin{remark}
{To construct a spanning tree, several  
algorithms may be employed, such as Kruskal's or Prim's algorithm for minimum spanning trees, or traversal-based methods like Breadth-First Search (BFS) and Depth-First Search (DFS) in \cite{cormen2022introduction}}. In our work, spanning trees are computed using Kruskal’s algorithm.
We also note that our algorithm is robust to the choice of spanning tree. As we will see
in Proposition \ref{prop:numerical error}, the discrepancy between the numerical solution associated with any spanning tree and the exact solution is bounded by the global error of the numerical method. 
Meanwhile,  using \eqref{useful-express} and the mass conservation law, one can recover the solution  at temporal grids on all nodes and edges of $G$.

When $G$ is a lattice graph arising from a spatial semi-discretization of a continuous domain, the edge weights and neighborhood structure take a particularly simple form. In this case, the standard finite-difference scaling gives $\omega_{ij}=\delta_x^{-2}$,
where $\delta_x$ denotes the spatial step size, and the adjacency sets $\mathcal N(i)$ are determined by the local stencil of the grid (e.g., nearest neighbors). For a two-dimensional lattice graph, a convenient spanning tree can be constructed by taking (i) all edges along one fixed coordinate direction (say, the $x$-direction) and (ii) for each fixed $x$-index, all edges along the $y$-direction. This choice yields a simple, structured spanning tree that is well aligned with the grid geometry; see, e.g., \cite{CLZ21u}.
\end{remark}

\begin{remark}
To alleviate the substantial cost of assembling the Jacobian at every Newton step, we employ a chord-type quasi-Newton approach in selected experiments. The Jacobian is evaluated once at the initial iterate and kept fixed thereafter. Although this modification degrades the local convergence from quadratic to linear, it avoids repeated Jacobian assembly and thus substantially reduces the per-iteration cost, improving overall runtime efficiency for the problems considered.

Unlike the multiple shooting method of \cite{doi:10.1137/21M142160X},
we first discretize \eqref{equation expression1} in time and then apply Newton’s method to solve the resulting nonlinear algebraic system \eqref{eq:Ftilde-rho}–\eqref{eq:Ftilde-v}. This formulation avoids time-interval partitioning and the associated matching conditions required by multiple shooting. As demonstrated by comparisons in Tables~\ref{tab:cpu_1d2d} and \ref{tab:comparison}, for the prescribed initial and terminal densities, the proposed Newton-based finite-difference solver consistently achieves improved computational efficiency. Moreover, when an upwind weight $\theta_{ij}$ is adopted, the resulting scheme is positivity-preserving for the density at the discrete level (see subsection \ref{sec:positivity}).
\end{remark}

\section{Properties of Newton's method for OT problem  on  graph}
\label{the convergence of Newton iteration}

In this section, we analyze the convergence properties and numerical integration errors of the proposed Newton method for the optimal transport problem on graphs.

\subsection{Convergence of Newton's method}
We next establish the local quadratic convergence of Newton’s method for the nonlinear algebraic system induced by the time discretization.  As in standard convergence results for Newton's method, under closeness assumptions of the initial guess to the exact solution, the task is to show that the Jacobian matrix at the exact solution is invertible.

\begin{theorem}\label{main-thm}   
   Let $\bigl((\bar{\rho}_i(t))_{i\le N-1},(\bar v_f(t))_{f\in E_T}\bigr), 
\quad t\in[0,1]$, be the unique solution of~\eqref{equation expression1} which belongs to
\( C^{1}\bigl([0,1];  \mathbb{R}^{2N-2}\bigr),\)
and satisfies that 
\(
\min\limits_{t\in[0,T]} \min\limits_{i=1}^N \bar{\rho}_i(t) \ge c > 0,\)
and suppose that 
\(\frac{\partial (\bar{\rho}_i(t_{M+1}))_{i\le N-1}}{\partial (\bar{v}_f(t_1))_{f\in E_T}}\)
is invertible.  Here $t_1=0, t_{m}=(m-1)\tau, m\le M+1$, such that $t_{M+1}=1.$

Assume that we form $\mathbf F$ in \eqref{eq:Ftilde-rho}-\eqref{eq:Ftilde-v} using \eqref{LRR} and \eqref{phi_f}. Also assume that the initial vector $x^{(0)}$ for Newton's method applied to the exactly computed $\tilde{\mathbf F}$ is sufficiently close to $\bar{x} :=((\bar{\rho}_i(t_m))_{2\le m\le M,i\le N-1},(\bar{v}(t_m))_{m\le M+1,f\in E_T})^{\top},$
i.e., $\lvert x^{(0)} - \bar{x} \rvert = \mathcal{O}(\eta)$ for $\eta > 0$ sufficiently small.
Then Newton's method with $\mathbf F$ being replaced by $\tilde{\mathbf F}$  is quadratically convergent to $\bar{x}.$ 
\end{theorem}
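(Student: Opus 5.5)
Since Newton's method converges quadratically near any root where the residual is $C^2$ with an invertible Jacobian, the plan has three steps. First, show that $\bar x$ is a root of $\tilde{\mathbf F}$ and that $\tilde{\mathbf F}$ is $C^2$ (hence has a locally Lipschitz Jacobian) near $\bar x$. Second, show that $J_{\tilde{\mathbf F}}(\bar x)$ is nonsingular. Third, invoke the classical Kantorovich/Newton local convergence theorem.

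\textbf{Step 1: root and smoothness.} Because $\min_{t,i}\bar\rho_i(t)\ge c>0$, the vector field in \eqref{equation expression1} is smooth in a neighbourhood of the trajectory. Here the non-tree velocities are linear in the tree velocities by Lemma~\ref{lem:tree-representation}, and $\rho_N=1-\sum_{i\le N-1}\rho_i$. We assume $\theta$ is smooth on $(0,\infty)^2$, as for the arithmetic mean. Set $\Psi_m:=\Psi(t_m,t_{m+1};\cdot)$, the exact flow map of \eqref{equation expression1} from $t_m$ to $t_{m+1}$. By smooth dependence on initial data, $\Psi_m$ is $C^2$ near $\bar y^m:=(\bar\rho(t_m),\bar v(t_m))$. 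Each block of $\tilde{\mathbf F}$ equals $y^{m+1}-\Psi_m(y^m)$, because the exact integrals $\tilde\Phi$ are exactly the increments of the flow. Hence $\tilde{\mathbf F}(\bar x)=0$, and $\tilde{\mathbf F}$ is $C^2$ near $\bar x$, with the boundary data $\rho^1=\mu$, $\rho^{M+1}=\nu$ held fixed.

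\textbf{Step 2: invertibility of the Jacobian.} This is the main point. Write $A_m:=D\Psi_m(\bar y^m)$ for the $(2N-2)\times(2N-2)$ monodromy block, and order the unknowns as $(v^1,y^2,\dots,y^M,v^{M+1})$, with $\rho^1$ and $\rho^{M+1}$ fixed. The Jacobian is then block lower-bidiagonal: the identity on the diagonal and $-A_m$ on the subdiagonal, as in multiple shooting. To show the kernel is trivial, suppose $J\,\delta x=0$. With $\delta\rho^1=0$, the rows give the recursion $\delta y^{m+1}=A_m\delta y^m$. Therefore
\[
\delta y^{M+1}=A_M\cdots A_1\,\delta y^1=D\Psi(0,1)\,(0,\delta v^1).
\]
The $\rho$-component of the last block row imposes $\delta\rho^{M+1}=0$. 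So
\[
\frac{\partial(\bar\rho_i(t_{M+1}))_{i\le N-1}}{\partial(\bar v_f(t_1))_{f\in E_T}}\,\delta v^1=0,
\]
and the invertibility hypothesis forces $\delta v^1=0$. The recursion then gives $\delta y^m=0$ for all $m$, so the final free component $\delta v^{M+1}$ also vanishes. The kernel is trivial and $J_{\tilde{\mathbf F}}(\bar x)$ is invertible.

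Equivalently, one can eliminate the interior unknowns by block Gaussian elimination. The Schur complement that remains is exactly this sensitivity matrix. The bookkeeping here is where care is needed: the unknown count is $(M-1)(N-1)+(M+1)(N-1)$ against the $2M(N-1)$ equations, so the system is square. One must also check that $v^{M+1}$ enters only through its own identity row.

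\textbf{Step 3: conclusion.} By continuity of $J_{\tilde{\mathbf F}}$, there is a ball $B(\bar x,r)$ on which $\|J^{-1}\|\le\beta$ and $J$ is $L$-Lipschitz. The standard estimate
\[
|x^{(k+1)}-\bar x|\le \tfrac{\beta L}{2}\,|x^{(k)}-\bar x|^2
\]
then holds whenever $|x^{(0)}-\bar x|=\mathcal O(\eta)$ with $\eta<\min(r,1/(\beta L))$. This gives quadratic convergence to $\bar x$.
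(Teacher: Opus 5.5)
Your proof is correct. It reaches the same crux as the paper: invertibility of the full Jacobian reduces to invertibility of $\partial\bar\rho(t_{M+1})/\partial\bar v(t_1)$, and standard Newton theory then finishes. The mechanism is different, though.

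The paper keeps the ordering $(\hat\rho,\hat v_T)$ and writes $J_{\tilde{\mathbf F}}$ as the left-rectangle Jacobian $J_{\mathbf F}$ plus an $\mathcal O(\tau^2\omega_{\max})$ term. It then performs two nested eliminations via the implicit function theorem. The first removes the interior density unknowns (block $A$). The second removes the velocities at levels $m\ge 2$ (block $D_0$, lower triangular with identity diagonal blocks). The final Schur complement $A_0-B_0D_0^{-1}C_0$ is identified with the sensitivity matrix, and absorbing the perturbation terms requires $\tau$ sufficiently small.

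You instead recognize $\tilde{\mathbf F}$ as exactly the multiple-shooting residual $y^{m+1}-\Psi_m(y^m)$ and prove injectivity directly. The identity $A_M\cdots A_1=D\Psi(0,1)$ then holds exactly by the chain rule for flows. So the hypothesis is used with no discretization error and no restriction on $\tau$ or $M$.

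Your route is shorter and avoids the delicate bookkeeping in the paper's version. For instance, $A$ is block lower-bidiagonal rather than literally the identity, so replacing $A^{-1}B$ by $B$ and perturbing determinants by $\mathcal O(\tau^2\omega_{\max}NM)$ both need justification.

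The paper's route has one advantage: it makes explicit the triangular block structure of the computable Jacobian $J_{\mathbf F}$, which it reuses for the upwind scheme in the appendix. Your kernel argument transfers to $\mathbf F$ just as easily, by replacing $\Psi_m$ with the explicit Euler step. This gives a discrete sensitivity matrix that converges to the exact one as $\tau\to 0$.

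Your explicit smoothness assumption on $\theta$ is the same one the paper uses implicitly when it asserts that $\tilde{\mathbf F}$ has a Lipschitz Jacobian near $\bar x$.
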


\begin{proof}
Since we are using the left-rectangle rule \eqref{LRR} for numerical integration,
it holds that \begin{align*}
 0=\widetilde {\bf F}(\bar{x})= {\bf F}(\bar{x})+\mathcal O((\tau\sqrt{\omega_{\max}})^{2}),
\end{align*}
where $\omega_{\max}=\max_{ij\in E}|\omega_{ij}|.$

To prove the quadratic convergence of Newton's method, it suffices to show that $J_{\widetilde{\mathbf F}}(x)$ is invertible and Lipschitz in a small neighborhood of $\bar{x}$. 
According to our assumption, $\widetilde {\bf F}, \bf F$ are   continuously  differentiable w.r.t. $x$, and thus are both Lipschitz in a small ball centered at $\bar{x}$ of radius $\eta$, $\mathcal B(\bar{x},\eta)$, with $\eta$ small enough so that 
$x=\big((\rho^m_i)_{2\le m\le M,i\le N-1}, (v_f^m)_{m\le M+1,f\in E_T}^{}\big)^{\top}$ satisfies $\min_{i\le N-1,m\le M}\rho_{i}^m>0$ for any $x\in \mathcal B(\bar{x},\eta)$.   So, we have to prove invertibility of $J_{\widetilde {\bf F}}$ near $\bar{x}$ so that the iterates $x^{(k+1)}$ in  \eqref{Newton iteration} remain in $\mathcal B(\bar{x},\eta)$ if $x^{(k)}$ is in $\mathcal B(\bar{x},\eta)$, for sufficiently small $\eta>0$.

Denote $\mathbf {F}_{\hat \rho}^{\rm fin}=( F_{i}^M)_{i\le N-1},$  and $
\mathbf F_{\hat \rho}=((\mathbf F_{\hat \rho}^{\rm int})^{\top}, (\mathbf F_{\hat \rho}^{\rm fin})^{\top})^{\top}.$ 
We represent  
the Jacobian matrix \( J_{\tilde{\bf F}}(x) \) as the following block matrix:
{\small
\begin{equation}
    J_{\tilde{
    \bf F}}(x)
    = 
    \frac{\partial (({\mathbf F}_{\hat\rho}^{{\rm int}})^{\top},\,  (\mathbf F_{\hat \rho}^{{\rm fin}})^{\top},\, ({\textbf{F}}_{{\hat v}_T})^{\top})^{\top}}{\partial (\hat \rho^\top,{{\hat v}_T}^\top)^\top } \Big|_{x} + \mathcal{O}(\tau^2 \omega_{\max})
    \equiv
    \begin{bmatrix}
        A & B \\[4pt]
        C & D
    \end{bmatrix}+ \mathcal{O}(\tau^2 \omega_{\max}),
    \label{the def of Newton JAcobi}
\end{equation}}
where $\hat{\rho}$ and ${\hat v}_T$ are defined in \eqref{the def of full system tilde F} and the block matrices are defined by
{
\begin{align}
\label{the definition of matrix A}
    A &= \frac{\partial {\bf F}_{\hat \rho}^{{\rm int}}}{\partial \hat \rho} \in \mathbb{R}^{m_1 n_1 \times m_1 n_1}, & 
    B &= \frac{\partial {\bf F}_{\hat \rho}^{\mathrm{int}}}{{{\partial\hat v}_T}} \in \mathbb{R}^{m_1 n_1 \times m_2 n_1}, \\\nonumber
    C &= \frac{\partial (({\bf F}_{\hat\rho}^{\mathrm{fin}})^{\top},\, ({\mathbf F}_{{\hat v}_T})^{\top})^{\top}}{\partial \hat \rho} \in \mathbb{R}^{m_2 n_1 \times m_1 n_1}, & 
    D &= \frac{\partial (({\bf F}_{\hat \rho}^{\mathrm{fin}})^{\top},\, ({\bf F}_{{\hat v}_T})^{\top})^{\top}}{\partial {{\hat v}_T}} \in \mathbb{R}^{m_2 n_1 \times m_2 n_1},
\end{align}}
where $m_1 = M-1$, $m_2 = M+1$, and $n_1 = N-1$.

On the one hand, since we are using \eqref{LRR}
$A=I$ and thus $\frac {\partial \widetilde {\bf F}_{\hat \rho}^{\rm int}}{\partial \hat \rho}$  is invertible for sufficiently small $\tau>0.$ By the implicit function theorem, there exists a function $\mathcal{H}(\cdot)$ such that 
$${\bf\widetilde {F}}_{\widehat \rho}^{\rm int}\Big(\mathcal{H}({{\hat v}_T}),{{\hat v}_T}\Big)=0,$$ which is
defined in 
a small neighborhood of $\mathcal B\big(  \bar{\hat v}_T ,\epsilon'\big)$ with $\epsilon'>0$ sufficiently small. Here ${\bar{\hat v}_T}$ is the velocity component of $\bar x$.
Moreover, 
\begin{align}
\label{implicit-G1}
\frac {d \mathcal{H}}{d  \hat v_T}( \hat v_T)=-A^{-1}B\Big|_{x=(\hat \rho^{\top}, \hat v_T^{\top})^\top}+\mathcal O(\tau^2\omega_{\max}).
\end{align}
On the other hand, in such a small neighborhood, consider the following function
$$ \widetilde {\mathcal{H}}(\hat v_T):=\Big(\big(\mathbf {\tilde F}_{\hat \rho}^{\rm fin}\big({\mathcal{H}}(\hat v_T),{\hat v}_T\big)\big)^{\top}, \big(\mathbf {\tilde F}_{{\hat v}_T} \big({\mathcal{H}}({\hat v}_T),{\hat v}_T\big)\big)^{\top}\Big)^{\top}.$$
For brevity we let 
$\Psi(\hat\rho,\hat v_T) := \bigl((\tilde{\mathbf F}^{\mathrm{fin}}_{\hat\rho}(\hat\rho,\hat v_T))^\top,
(\tilde{\mathbf F}_{\hat v_T}(\hat\rho,\hat v_T))^\top
\bigr)^\top.$ Then, by the chain rule and \eqref{implicit-G1}, it holds that
\begin{align*}
\frac{d\widetilde{\mathcal H}}{d\hat v_T}\Big|_{\hat v_T}
&=
\left.\partial_{\hat v_T}\Psi(\hat\rho,\hat v_T)\right|_{\hat\rho=\mathcal H(\hat v_T)}
+
\left.\partial_{\hat\rho}\Psi(\hat\rho,\hat v_T)\right|_{\hat\rho=\mathcal H(\hat v_T)}
\frac{d\mathcal H}{d\hat v_T}\Big|_{\hat v_T} \\
&=
\left[D-C(A^{-1}B)\right]
\Big|_{x=(\mathcal H(\hat v_T)^\top,\hat v_T^\top)^\top}
+\mathcal O(\tau^2\omega_{\max}).
\end{align*}
Therefore, by the Schur complement, we conclude that 
\begin{align*}
    \det(J_{{\widetilde {\bf F}}})&=\det(A)\,\det(D-CA^{-1}B)+\mathcal O(\tau^2\omega_{\max}N M)\\
    &= \det(D-CB)+\mathcal O(\tau^2\omega_{\max}N M) =\det J_{{\hat v}_T}
\end{align*}
in a small neighborhood of $\bar{v}_T$, where $J_{{\hat v}_T}:=\frac {d \widetilde {\mathcal{H}}}{d {\hat v}_T} \Big|_{{\hat v}_T}$.

It remains to prove the invertibility of $J_{{\hat v}_T}$.
To this end, we decompose the vector ${\hat v}_T=(v_f^m)_{m\le M+1,f\in E_T}$ into the following two parts: 
\({\hat v}_T = 
(
\mathcal V^{\top}, 
\widetilde {\mathcal V}^{\top}
)^{\top},\)
where $\mathcal V$ denotes the velocity vector at the first time level and 
$\widetilde {\mathcal V}$ collects all remaining velocity variables.
In this ordering, the matrix $J_{{\hat v}_T}$ takes the block form
\[
J_{{\hat v}_T}
=
\begin{bmatrix}
A_0 & B_0\\[3pt]
C_0 & D_0
\end{bmatrix}+\mathcal O(\tau^2\omega_{\max}),
\]
where
\begin{equation}
\begin{aligned}
A_0=\frac{\partial  {\bf F}_{\hat \rho}^{\mathrm{fin}}}{\partial \mathcal V}\Big|_{\big({\mathcal{H}}({\hat v}_T)^\top,\, {{\hat v}_T}^\top \big)^\top}\in \mathbb{R}^{n_1 \times n_1},\;
B_0=\frac{\partial  {\bf F}_{\hat \rho}^{\mathrm{fin}}}{\partial \widetilde {\mathcal V}}\Big|_{\big({\mathcal{H}}({\hat v}_T)^\top,\, {{\hat v}_T}^\top\big)^\top}\in \mathbb{R}^{n_1 \times n_1M},\\
C_0=\frac{\partial {\bf F}_{{\hat v}_T}}{\partial \mathcal V}\Big|_{\big({\mathcal{H}}({\hat v}_T)^\top,\, {{\hat v}_T}^\top\big)^\top}\in \mathbb{R}^{ n_1M \times n_1},\;
D_0=\frac{\partial {\bf F}_v}{\partial \widetilde {\mathcal V}}\Big|_{\big({\mathcal{H}}({\hat v}_T)^\top,\, {{\hat v}_T}^\top \big)^\top}\in \mathbb{R}^{n_1 M \times n_1 M}.
\end{aligned}
\label{the def of D_0}
\end{equation}

Since $\Phi_i,\Phi_f$ are left-rectangle numerical integration, $D_0$ is block lower triangular with identity diagonal blocks, and is therefore invertible. By the implicit function theorem again, there exists a differentiable map $\Gamma$
defined on a small neighborhood of $\mathcal V^*=((\bar{v})^{1}_{f})_{f\in E_T}$
such that 
$\widetilde {\mathcal V}=\Gamma(\mathcal V).$

Consider the function defined by 
${U}(\mathcal V):=\tilde{\mathbf F}_{\rho}^{\rm fin}(\mathcal V, \Gamma(\mathcal V))$ in this small neighborhood of $\mathcal V^*$.
The Schur complement  shows that
{\small
\begin{equation}
\frac{\partial U}{\partial \mathcal V} (\mathcal V)
=
[A_0 - B_0D_0^{-1}C_0]\Big|_{({\mathcal{H}}({\hat v}_T)^{\top},{\hat v}_T^{\top})=\big(\mathcal H(\mathcal V, \Gamma( \mathcal V))^{\top},(\mathcal V, \Gamma( \mathcal V))^{\top}\big)}+\mathcal O(\tau^2\omega_{\max}NM)
= \hat J_{\mathcal V}.
\label{the def of hat j}
\end{equation}}
Thus the invertibility of the full Jacobian $J_{\tilde {\bf F}}$ is equivalent to the nonsingularity of the 
matrix $\hat J_{\mathcal V}$.   This implies that  
\[
\det(J_{{\hat v}_T})
=  
\det(D_0)\,\det(A_0 - B_0D_0^{-1}C_0)+\mathcal O(\tau^2\omega_{\max}NM).\]
For sufficiently small $\tau>0$, we have that 
$\hat J_{\mathcal V}=A_0- B_0D_0^{-1}C_0$  is invertible
since $\frac{\partial \bar{\rho}(t_{M+1})}{\partial (\bar{v}^1_T)}=\frac {\partial U}{\partial \mathcal V}.$

Therefore, we conclude that 
$J_{\tilde{\mathbf F}}$ is invertible in a sufficiently small neighborhood of $\bar{x}$. The remaining part of the argument, namely, the quadratic convergence of Newton's method, is standard.
\end{proof}

\subsection{Numerical integration error in Newton's method}

The following proposition provides a numerical error bound arising from the discretization in Newton's iteration and indicates that our numerical method is robust with respect to the choice of spanning tree.

\begin{proposition}[Numerical Error]
\label{prop:numerical error}
Let the conditions of Theorem \ref{main-thm} hold. 
Here, $\mathbf F$ denotes the nonlinear system obtained by approximating the integral terms in \eqref{eq:Ftilde-rho}--\eqref{eq:Ftilde-v} by the left-endpoint quadrature rule, whereas $\widetilde{\mathbf F}$ denotes the corresponding exact system. 

Assume that the initial vector $x^{(0)}$ for the Newton's method with $\mathbf F$ and $\widetilde {\mathbf F}$ is sufficiently close to $\bar{x}:=((\bar{\rho}_i(t_m))_{2\le m\le M,i\le N-1},(\bar{v}_f(t_m))_{m\le M+1,f\in E_T})^{\top}.$ 
Let  Newton's methods with $\mathbf F$ and $\widetilde {\mathbf F}$ 
satisfy a termination tolerance of $\epsilon$:
\[
\|\mathbf F(x^{(k)})\|<\epsilon
\quad\text{and}\quad
\|\widetilde{\mathbf F}(x^{(\widetilde k)})\|<\epsilon .
\]
Denote the corresponding  numerical solutions by  $x^{(k,\mathbf F)}$ and $ x^{(\widetilde k,\widetilde {\mathbf F})}$ with the stopping iteration being $k$ and $\widetilde k$, respectively.
Then it holds that  
\begin{equation}\label{numeric-time-err}
\|x^{(k,\mathbf{F})} - x^{(\widetilde{k},\widetilde{\mathbf{F}})}\|
\lesssim \tau + \epsilon \tau^{-1}.
\end{equation}
\end{proposition}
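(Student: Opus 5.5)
The plan is to insert the exact nodal solution $\bar x$ between the two numerical solutions and use the triangle inequality:
\[
\|x^{(k,\mathbf F)}-x^{(\widetilde k,\widetilde{\mathbf F})}\|\le \|x^{(k,\mathbf F)}-\bar x\|+\|\bar x-x^{(\widetilde k,\widetilde{\mathbf F})}\|.
\]
Both terms will be controlled by a stability (inverse-Jacobian) estimate around $\bar x$. The two needed inputs are the consistency relation $\widetilde{\mathbf F}(\bar x)=0$ together with $\mathbf F(\bar x)=\widetilde{\mathbf F}(\bar x)+\mathcal O(\tau^2\omega_{\max})$ per time step, which is the local truncation error of the left-rectangle rule already used in the proof of Theorem \ref{main-thm}, and a uniform bound on the inverse Jacobian in a ball $\mathcal B(\bar x,\eta)$.

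For the stability bound I would reuse the block structure from the proof of Theorem \ref{main-thm}. There $A=I$ and $D_0$ is block lower triangular with identity diagonal blocks, while the off-diagonal blocks are $\mathcal O(\tau)$. Inverting such a bidiagonal discrete-evolution matrix is a discrete Gronwall sum over $M=\tau^{-1}$ steps. Each $\mathcal O(1)$ residual entry therefore propagates with a uniformly bounded amplification per step, and up to $M$ of them accumulate. Combined with the invertibility of the Schur complement $\hat J_{\mathcal V}$, which is uniform in $\tau$ because it approximates $\partial\bar\rho(t_{M+1})/\partial \bar v_T(t_1)$, this gives the componentwise estimate
\[
\|x-y\|_\infty \lesssim \sum_{m}\|G^m(x)-G^m(y)\| \le M\,\|G(x)-G(y)\|_\infty
\]
for $G\in\{\mathbf F,\widetilde{\mathbf F}\}$ and $x,y\in\mathcal B(\bar x,\eta)$. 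In other words, $\|J_G^{-1}\|\lesssim \tau^{-1}$ in the $\infty$-norm, uniformly on the ball. A mean-value argument for $G$ then converts this into the estimate above.

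Applying this to $\widetilde{\mathbf F}$ gives $\|x^{(\widetilde k,\widetilde{\mathbf F})}-\bar x\|\lesssim \tau^{-1}\|\widetilde{\mathbf F}(x^{(\widetilde k)})-\widetilde{\mathbf F}(\bar x)\|\le \epsilon\tau^{-1}$. Applying it to $\mathbf F$ gives
\[
\|x^{(k,\mathbf F)}-\bar x\|\lesssim \tau^{-1}\bigl(\|\mathbf F(x^{(k)})\|+\|\mathbf F(\bar x)\|\bigr)\lesssim \tau^{-1}(\epsilon+\tau^2)=\epsilon\tau^{-1}+\tau.
\]
Here the $\tau^2$ comes from the per-step truncation error, and the factor $\tau^{-1}$ reflects the accumulation over $M$ steps, which yields global first order. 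Summing the two pieces gives \eqref{numeric-time-err}.

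The main obstacle is justifying the uniform $\mathcal O(\tau^{-1})$ bound on $J^{-1}$ on the whole ball, and ensuring both Newton iterates stay in that ball. The latter follows from the local convergence in Theorem \ref{main-thm}, together with the initial closeness hypothesis, for $\tau$ and $\epsilon$ small. For the former, I would first try to make the Schur-complement reduction quantitative: bound $\|D_0^{-1}\|$ and $\|(I-\text{lower part})^{-1}\|$ via the discrete Gronwall lemma, whose constants depend on $c$, $\omega_{\max}$, and the $C^1$ norms of the solution. I would then use continuity of $\hat J_{\mathcal V}$ in $x$ and $\tau$ to keep $\|\hat J_{\mathcal V}^{-1}\|$ bounded near $\bar x$. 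Robustness with respect to the spanning tree follows, since nothing in this argument depends on the choice of $E_T$ beyond these constants.
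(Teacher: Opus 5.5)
Your proposal is correct in outline, but it follows a different route from the paper. The paper never passes through $\bar x$. It reads the two stopping conditions as $\mathcal O(\epsilon)$-perturbed one-step recursions: the left-rectangle step for $\mathbf F$ and the exact flow for $\widetilde{\mathbf F}$. Using $\mathbf F=\widetilde{\mathbf F}+\mathcal O(\tau^2)$ and local Lipschitz continuity, it derives $e_{m+1}\lesssim e_m+\tau^2+\epsilon$ for $e_m=\|\rho^m-\widetilde\rho^m\|+\|v^m-\widetilde v^m\|$, and closes with a forward discrete Gronwall argument.

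That route is shorter and needs no bound on $J^{-1}$. However, it is an initial-value argument applied to a boundary-value problem. The per-step factor has to be read as $1+C\tau$. More importantly, the recursion starts from $e_1$. There $\rho^1=\widetilde\rho^1=\mu$, but nothing in that proof controls $\|v^1-\widetilde v^1\|$.

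Your consistency-plus-stability argument supplies exactly this missing control. The $\tau$-uniform invertibility of the Schur complement $\hat J_{\mathcal V}\approx\partial\bar\rho(t_{M+1})/\partial\bar v_T(t_1)$ is what pins down the unknown initial velocity. As a by-product you obtain separate global error bounds, $\|x^{(k,\mathbf F)}-\bar x\|\lesssim\tau+\epsilon\tau^{-1}$ and $\|x^{(\widetilde k,\widetilde{\mathbf F})}-\bar x\|\lesssim\epsilon\tau^{-1}$. The price is the quantitative estimate $\|J^{-1}\|_\infty\lesssim\tau^{-1}$, uniformly on a $\tau$-independent ball. As you say, that is where the real work lies.

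Two details need fixing when you write it out.

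\begin{enumerate}
\item The subdiagonal blocks of $A$ and $D_0$ are $-I+\mathcal O(\tau)$, not $\mathcal O(\tau)$. With $\mathcal O(\tau)$ off-diagonal blocks the inverse would be $\mathcal O(1)$. Your $\tau^{-1}$ comes from products of up to $M$ factors of size $1+\mathcal O(\tau)$, summed over up to $M$ residual entries. Your Gronwall reasoning already assumes this, so only the description of the block structure is wrong.
\item Theorem~\ref{main-thm} concerns Newton for $\widetilde{\mathbf F}$, whose root is $\bar x$. For the $\mathbf F$-iterates you need your own bounds. A Kantorovich argument at $\bar x$ uses $\|J_{\mathbf F}(\bar x)^{-1}\|_\infty\lesssim\tau^{-1}$, $\|\mathbf F(\bar x)\|_\infty\lesssim\tau^2$, and an $\mathcal O(\tau)$ Lipschitz constant for $J_{\mathbf F}$ in the $\infty$-norm. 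This gives a root of $\mathbf F$ within $\mathcal O(\tau)$ of $\bar x$. Since $\|J^{-1}\|_\infty\cdot\mathrm{Lip}(J)=\mathcal O(1)$, the Newton convergence radius around that root does not shrink with $\tau$.
\end{enumerate}
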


\begin{proof}
On the one hand, the termination tolerance yields that 
\begin{align*}
\mathbf F(x^{(k,\mathbf F)})=\mathcal O(\epsilon), \;{\widetilde {\mathbf F}}(x^{(\widetilde k,\widetilde {\mathbf F})}) =\mathcal O(\epsilon).
\end{align*}
This means that 
\begin{align*}
\rho_i^{m+1}&=
      \rho_i^m
    - \Phi_i \Big(t_m,t_{m+1},(\rho_i^m)_{i\le N-1},(v_f^m)_{f\in E_T}\Big)+\mathcal O(\epsilon), \; m\le M ,\\
v_{f}^{m+1}&=v_{f}^m 
    - \Phi_f \Big(t_m,t_{m+1},(\rho_i^m)_{i\le N-1},(v_f^m)_{f\in E_T}\Big)+\mathcal O(\epsilon),\; m\le M,
\end{align*}
and that
\begin{align*}
\widetilde \rho_i^{m+1}&=
      \widetilde \rho_i^m
    - \widetilde \Phi_i \Big(t_m,t_{m+1},(\widetilde \rho_i^m)_{i\le N-1},(\widetilde v_f^m)_{f\in E_T}\Big)+\mathcal O(\epsilon), \; m\le M ,\\
\widetilde v_{f}^{m+1}&=\widetilde v_{f}^m 
    - \widetilde \Phi_f \Big(t_m,t_{m+1},(\widetilde \rho_i^m)_{i\le N-1},(\widetilde v_f^m)_{f\in E_T}\Big)+\mathcal O(\epsilon),\; m\le M.
\end{align*}
Here, for simplicity, we omit the dependence of $\rho_i^{m},v_f^{m},\widetilde {\rho_i^{m}},\widetilde {v_f^{m}}$ on $k,\widetilde k$.

On the other hand, $\widetilde {\bf F}, \bf F$ are   continuously  differentiable w.r.t. $x$, and thus are both Lipschitz in a small neighborhood $\mathcal B(\bar{x},\eta)$ for some $\eta>0$ small enough.
Since $\mathbf F=\widetilde {\mathbf F}+\mathcal O(\tau^2)$  in this neighborhood, we further have that 
\begin{align*}
\|\rho^{m+1}-\widetilde{\rho}^{m+1}\|
+\|v^{m+1}-\widetilde{v}^{m+1}\|
&\lesssim  \|\rho^{m}-\widetilde{\rho}^{m}\|
+\|v^{m}-\widetilde{v}^{m}\|+\mathcal O(\tau^2)+\mathcal O(\epsilon).
\end{align*}
Gronwall's inequality yields that 
\begin{align*}
\|x^{(k,\mathbf F)}-x^{(\widetilde k,\widetilde {\mathbf F})}\|\lesssim \mathcal O(\tau)+\mathcal O(\epsilon\tau^{-1}).
\end{align*}
Hence the proof is complete.
\end{proof}
\begin{remark}
\label{the discrepancy of the spanning tree}
     Theorem \ref{main-thm} ensures the quadratic convergence of Newton's iteration, i.e., 
   \begin{align*}
   \|x^{(k,\widetilde {\mathbf F})}-\bar{x}\|\lesssim  \|x^{( k-1,\widetilde {\mathbf F})}-\bar{x}\|^2 \lesssim \eta^{2^{ k}}\le \epsilon,
   \end{align*}
   where $\eta$ is the initial error between $x^{(0)}$ and $\bar{x}$.
 Proposition~\ref{prop:numerical error}guarantees that the numerical integration error is $\mathcal O(\tau)$ due to the use of the left-rectangular rule.
   
Moreover, we note that the choice of the spanning tree does not affect the numerical computation; see Example~\ref{the independence of Tree}. Let $x^{(k_{1},\\T_1)}$ and $x^{(k_{2},T_2)}$ be two numerical solutions of Newton's method with the left-rectangular rule, where $T_i$, $i=1,2$, are two distinct spanning trees of the graph $G,$ and {$k_{i}$} is the corresponding stopping step. By Proposition~\ref{prop:numerical error}, and $\frac 1{\sqrt{\omega_{e}}}v_{e}=\sum_{f\in E_T}a_f \frac 1{\sqrt{\omega_{f}}} v_f, \; \text{for any}\; e\in E, f=(i,j)\in E_{TT_{n}}, n=1,2,$ and the mass conservation, the discrepancy between these two numerical solutions is
\(\mathcal O(\tau)+\mathcal O(\epsilon\tau^{-1})\).
\end{remark}

In the final part of this section, we address the preservation of positivity of the density, which plays a crucial role in the convergence of the Newton-type algorithm.

\subsection{Non-negativity preservation of the density}
\label{sec:positivity}

A key assumption in Theorem~\ref{main-thm} is that
the convergence of the Newton-type solver requires 
that the density remain positive at all time levels.
Since our time discretization replaces the exact temporal integration,
positivity is not automatically obtained and must be enforced at the level of the numerical flux.


We are now in a position to prove the non-negativity of the numerical
solution to the continuity equation arising from the discrete OT problem
when the probability weight is chosen to be the upwind weight. More precisely,
the following result concerns the same discrete OT formulation as before,
except that the average weight function is replaced by the upwind weight
function
\begin{equation}
\label{eq:theta-upwind_1}
\theta_{ij}^{\mathrm{up}}(\rho^m,v^m)=
\begin{cases}
\rho_i^m, & v_{i,j}^m \ge 0,\\[2pt]
\rho_j^m, & v_{i,j}^m < 0.
\end{cases}
\end{equation}
Under a suitable upwind CFL condition, the resulting fully discrete
continuity equation preserves the non-negativity of the density.

See Appendix~\ref{sec:invertibility-upwind} for the convergence of Newton's method applied to 
OT problems on graphs with an upwind probability weight function.

\begin{proposition}[Non-negativity preservation under an upwind CFL condition]
\label{prop:positivity}
Assume $\rho^0\ge 0$ componentwise and let $\theta_{ij}= \theta^{\mathrm{up}}_{ij}$ be defined by \eqref{eq:theta-upwind_1}.
Define $(v_{i,j}^m)^+ := \max(v_{i,j}^m,0)$ and $(v_{i,j}^m)^- := \max(-v_{i,j}^m,0)$ so that
$v_{i,j}^m=(v_{i,j}^m)^+-(v_{i,j}^m)^-$.
If, for all nodes $i$ and all time levels $m$, the time step $\tau$ satisfies
\begin{equation}
\label{eq:CFL-local}
\tau \sum_{j\in \mathcal N(i)} \sqrt{\omega_{ij}}\,(v_{i,j}^m)^+ \le 1,
\end{equation}
then the density update obtained from \eqref{eq:Ftilde-rho} is nonnegativity-preserving, that is
\[
\text{if}\quad \rho^m \ge 0, \quad\text{then}\quad  \rho^{m+1}\ge 0,
\qquad m=1,\dots,M.
\]
In particular, letting $d_{\max} := \max_{i\in V} |\mathcal N(i)|$,
$\omega_{\max}:=\max_{(i,j)\in E}\omega_{ij}$, and $\|v^m\|_{\infty}:=\max_{(i,j)\in E}|v_{i,j}^m|$,
a sufficient global condition is
\begin{equation}
\label{eq:CFL-global}
\tau \le \frac{1}{d_{\max} \,\sqrt{\omega_{\max}}\,\|v^m\|_{\infty}}
\qquad \text{for all } m.
\end{equation}
\end{proposition}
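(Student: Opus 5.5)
The plan is to write the explicit density update from \eqref{eq:Ftilde-rho} with the left-rectangular rule \eqref{LRR} and the upwind weight \eqref{eq:theta-upwind_1}. Then $\rho_i^{m+1}$ becomes a combination of the $\rho^m_k$ whose coefficients are nonnegative under \eqref{eq:CFL-local}. Setting $F_i^m=0$ gives
\[
\rho_i^{m+1}=\rho_i^m-\tau\sum_{j\in\mathcal N(i)}\sqrt{\omega_{ij}}\,v_{i,j}^m\,\theta^{\mathrm{up}}_{ij}(\rho^m,v^m).
\]
I will split the sum according to the sign of $v_{i,j}^m$. If $v_{i,j}^m\ge0$, the upwind choice gives $\theta_{ij}=\rho_i^m$, and the term equals $-\tau\sqrt{\omega_{ij}}(v_{i,j}^m)^+\rho_i^m$. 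If $v_{i,j}^m<0$, then $\theta_{ij}=\rho_j^m$ and $v_{i,j}^m=-(v_{i,j}^m)^-$, so the term equals $+\tau\sqrt{\omega_{ij}}(v_{i,j}^m)^-\rho_j^m$. This yields the identity
\[
\rho_i^{m+1}=\Bigl(1-\tau\sum_{j\in\mathcal N(i)}\sqrt{\omega_{ij}}(v_{i,j}^m)^+\Bigr)\rho_i^m+\tau\sum_{j\in\mathcal N(i)}\sqrt{\omega_{ij}}(v_{i,j}^m)^-\rho_j^m .
\]

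Next I check the signs of the two pieces. Under \eqref{eq:CFL-local} the first coefficient is nonnegative, and since $\rho_i^m\ge0$ the first term is nonnegative. The second term is a sum of nonnegative products, because $\omega_{ij}>0$, $(v^m_{i,j})^-\ge0$ and $\rho_j^m\ge0$. Hence $\rho^{m+1}_i\ge0$, and induction on $m$ starting from $\rho^0=\mu\ge0$ gives the claim at all time levels.

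There is one bookkeeping point, which I expect to be the only real subtlety. In \eqref{eq:Ftilde-rho} the update is imposed only for $i\le N-1$, and the last component is recovered from mass conservation. I will argue that the same formula also holds for $i=N$. The flux is antisymmetric: $v_{j,i}=-v_{i,j}$, and the upwind weight is symmetric, since $\theta_{ji}^{\mathrm{up}}=\theta_{ij}^{\mathrm{up}}$ (both pick the donor node). Therefore
\[
\sum_i\sum_{j\in\mathcal N(i)}\sqrt{\omega_{ij}}\,v_{i,j}\,\theta_{ij}=0,
\]
so summing the updates over all $i$ conserves total mass. It follows that $\rho_N^{m+1}=1-\sum_{i<N}\rho_i^{m+1}$ satisfies the same explicit update, and the argument above applies to it too. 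The velocities $v^m_e$ on non-tree edges come from \eqref{useful-express}, but they enter only as given numbers, so no further issue arises.

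For the global condition, I will bound the local sum:
\[
\sum_{j\in\mathcal N(i)}\sqrt{\omega_{ij}}(v_{i,j}^m)^+\le|\mathcal N(i)|\sqrt{\omega_{\max}}\|v^m\|_\infty\le d_{\max}\sqrt{\omega_{\max}}\|v^m\|_\infty .
\]
Hence \eqref{eq:CFL-global} implies \eqref{eq:CFL-local} for every $i$ and $m$.
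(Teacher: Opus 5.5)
Your proposal is correct and matches the paper's proof: the same sign split of the upwind flux writes $\rho_i^{m+1}$ as a nonnegative combination of $\rho_i^m$ and the $\rho_j^m$, the local CFL condition \eqref{eq:CFL-local} makes the coefficient of $\rho_i^m$ nonnegative, and the same neighbor-count bound gives \eqref{eq:CFL-global}. Your extra check that the eliminated node $N$ satisfies the same update via mass conservation is a point the paper skips. One small correction: $\theta^{\mathrm{up}}_{ji}=\theta^{\mathrm{up}}_{ij}$ fails when $v_{i,j}=0$, since then $\theta^{\mathrm{up}}_{ij}=\rho_i$ while $\theta^{\mathrm{up}}_{ji}=\rho_j$. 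What you actually need is antisymmetry of the flux $v_{i,j}\theta^{\mathrm{up}}_{ij}$, and that still holds because both fluxes vanish in that case.
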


\begin{proof}
Inserting the upwind weight $\theta^{\mathrm{up}}_{ij}$ defined in \eqref{eq:theta-upwind_1} into \eqref{eq:Ftilde-rho}, we obtain
\begin{align*}
\rho_i^{m+1}
&= \rho_i^m
-\tau \sum_{j\in \mathcal N(i)} \sqrt{\omega_{ij}}
\Big((v_{i,j}^m)^+\,\rho_i^m - (v_{i,j}^m)^-\,\rho_j^m\Big)\\
&= \rho_i^m\Big(1-\tau \sum_{j\in \mathcal N(i)} \sqrt{\omega_{ij}}(v_{i,j}^m)^+\Big)
\;+\; \tau \sum_{j\in \mathcal N(i)} \sqrt{\omega_{ij}}(v_{i,j}^m)^-\,\rho_j^m.
\end{align*}
By the induction hypothesis, $\rho_j^m\ge 0$ for all $j$, and the second term is therefore nonnegative.
Under \eqref{eq:CFL-local}, the coefficient of $\rho_i^m$ is also nonnegative, hence $\rho_i^{m+1}\ge 0$.
Finally, \eqref{eq:CFL-global} follows from
\[\sum_{j\in\mathcal N(i)} \sqrt{\omega_{ij}}(v_{i,j}^m)^+
\le |\mathcal N(i)|\,\sqrt{\omega_{\max}}\,\|v^m\|_{\infty}\le d_{\max} \,\sqrt{\omega_{\max}}\,\|v^m\|_{\infty}.\]
\end{proof}




For numerical validation, we consider a benchmark experiment on a one dimensional periodic uniform lattice graph, together with a Gaussian mixture example on a grid; see Supplementary Example~SM0.1 and Example~SM0.3. 
To further evaluate the accuracy and convergence of the method, we also include benchmark tests in Supplementary Example~SM0.2, where the spatial and temporal convergence results are reported.

\section{Numerical examples}
\label{Numerical Examples}
In this section we report on numerical experiments for Algorithm~\ref{alg:OT-solver}. We begin with the qualitative behavior of the computed transport dynamics, including multimodal transport, the effect of spanning-tree parametrizations, and topology-induced patterns on non-lattice graphs; see Examples~\ref{the independence of Tree} and \ref{ex:dumbbell}. Then, we present extensions of our basic setup on important applications, such as inverse topology inference and opinion-dynamics modeling; see Examples~\ref{Topology recovery from flux sparsification} and \ref{ex:social-relax}. We conclude with a limited comparison of our approach to other approaches; see Section \ref{comparison}.
In the benchmark tests, the Jacobian is assembled analytically, whereas in the remaining experiments it is approximated by a first-order divided-difference formula. Periodic boundary conditions are imposed in all lattice-based simulations. The following Figure~\ref{the graph struture for examples} depicts the structural properties of several graphs.
\begin{figure}[ht]
    \centering
    \includegraphics[trim=0cm 0cm 0cm 1cm, clip, width=0.9\textwidth]{ 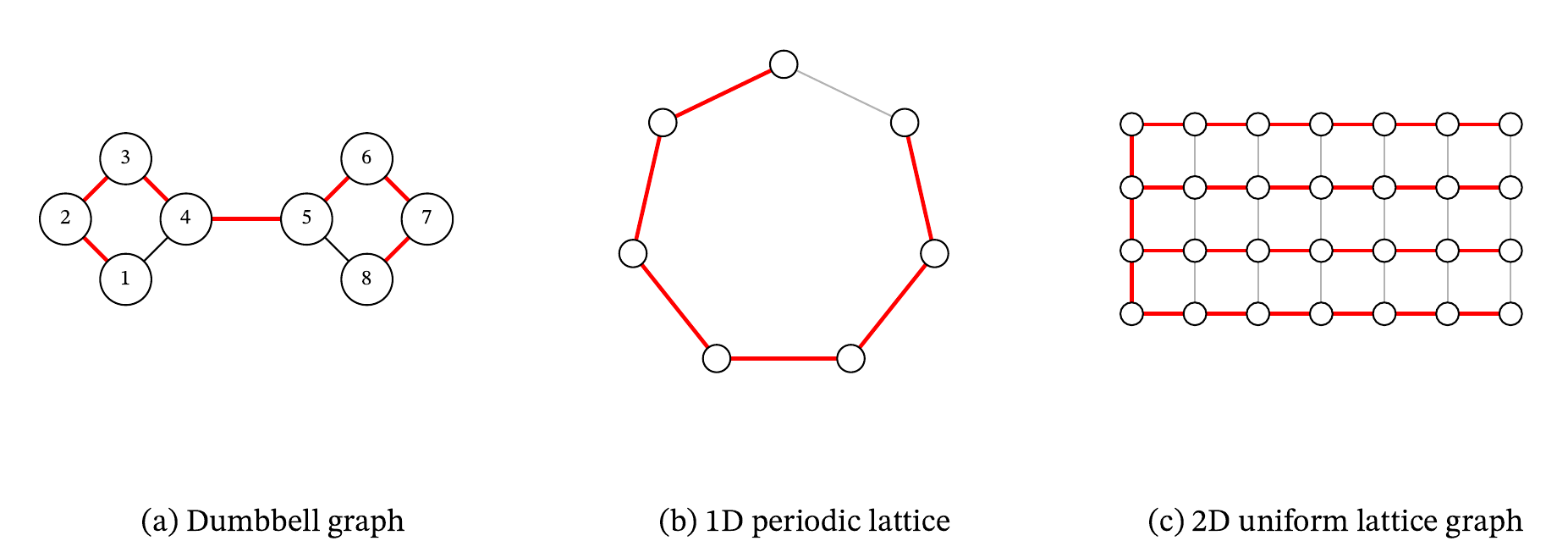}
    \caption{Panel (a) illustrates the graph structure in Example~\ref{ex:dumbbell}, whereas panels (b) and (c) present the spanning trees employed in Comparison Test~\ref{comparison}. The highlighted edges represent the corresponding spanning trees.}
    \label{the graph struture for examples}
\end{figure}

\subsection{Algorithmic properties}
\subsubsection{Consistency of the solutions under different spanning trees}
\noindent
Below we show numerically that different spanning-tree choices do not lead to substantially different solutions.

Figure~\ref{W2-MST-comparison}(a) reports the discrete squared Wasserstein distance $W_2^2$ computed on the same underlying graph with different spanning-tree parametrizations. Within the discrete dynamical
formulation, $W_2^2$ can be estimated by two quantities that coincide in the structure-preserving (exact)
setting and remain close under time discretization and inexact Newton solves. The first estimator is the
time-integrated discrete action,
\begin{equation}\label{eq:W2_action}
    W_2^2(\rho^0,\rho^1)
    \;\approx\;
    a := \sum_{n=0}^{M-1} \tau
    \sum_{(i,j)\in E}
    \theta_{ij}(\rho^n)\, |v_{i,j}^n|^2 ,
\end{equation}
where $\tau$ is the time step size, $M$ is the number of time steps, $v_{i,j}^n$ denotes the discrete edge
velocity at time level $t_n=n\tau$, and $\theta_{ij}(\rho^n)$ is the density-dependent edge weight at time $t_n$. An
alternative estimator exploits the Hamiltonian structure of the discretized Benamou--Brenier system and
evaluates the initial kinetic energy,
\begin{equation}\label{eq:W2_initial}
    W_2^2(\rho^0,\rho^1)
    \;\approx\;
    b := \sum_{(i,j)\in E}
    \theta_{ij}(\rho^0)\, |v_{i,j}^0|^2 ,
\end{equation}
where $v_{i,j}^0$ is the discrete initial velocity recovered from the Newton solution of the discretized optimality system.

\begin{example}[Independence from spanning-tree parametrization]
\label{the independence of Tree}
The experiment is conducted on the five-node graph shown in Figure~\ref{W2-MST-comparison}(b), consisting of a
cycle augmented by the diagonal edge $(1,3)$. We consider three spanning trees:
(i) $\texttt{T}_1$ with edges $(1,2),(2,3),(3,4),(4,5)$;
(ii) $\texttt{T}_2$ with edges $(2,3),(3,4),(4,5),(1,5)$; and
(iii) $\texttt{T}_3$ with edges $(2,3),(1,3),(1,5),(4,5)$.
Although these choices induce different parametrizations of the velocity variables along the tree edges,
the resulting values of $W_2^2$ computed from both estimators $a$ and $b$ are nearly identical for all time
resolutions $M$.

The estimated distances are not strictly identical: the small discrepancies are attributable to (i) the temporal
discretization error and (ii) the finite termination tolerance of the Newton iteration (and, to a lesser
extent, floating-point round-off). This behavior is consistent with the stability statement in
Proposition~\ref{prop:numerical error}, which implies that the computed transport cost is independent of the
spanning tree choice up to numerical errors. Consequently, the variations observed in
Figure~\ref{W2-MST-comparison}(a) should be interpreted as numerical effects rather than artifacts of a
particular spanning-tree parametrization; the underlying Wasserstein distance remains an intrinsic geometric quantity
associated with the graph.
\begin{figure}[ht]
\centering

\begin{tabular}{@{}c@{\hspace{0.04\textwidth}}c@{}}
\includegraphics[width=0.48\textwidth]{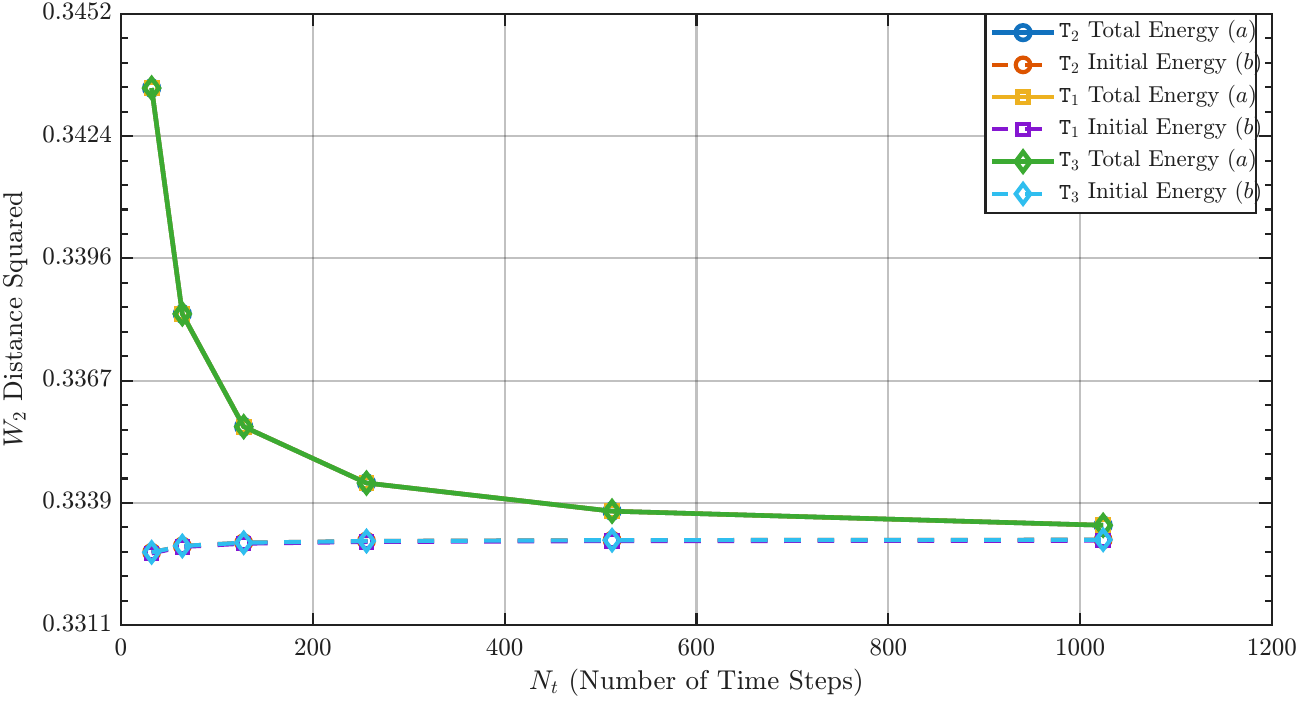}
&
\begin{tikzpicture}[scale=0.8,
    vertex/.style={circle, draw, minimum size=7mm, inner sep=0pt}]
    \node[vertex] (1) at (90:2) {1};
    \node[vertex] (5) at (18:2) {5};
    \node[vertex] (4) at (-54:2) {4};
    \node[vertex] (3) at (-126:2) {3};
    \node[vertex] (2) at (162:2) {2};

    \draw (1)--(2);
    \draw (2)--(3);
    \draw (3)--(4);
    \draw (4)--(5);
    \draw (5)--(1);
    \draw (1)--(3);
\end{tikzpicture}
\\[-0.2ex]
\parbox[t][2.6ex][t]{0.48\textwidth}{\centering {\footnotesize(a)  Comparison under different spanning trees}}
&
\parbox[t][2.6ex][t]{0.48\textwidth}{\centering {\footnotesize (b) Underlying graph used in the experiment}} 
\end{tabular}

\caption{Example~\ref{the independence of Tree}.  Comparison of the discrete $W_2^2$ distance under different spanning
spanning-tree parametrizations and the corresponding five-node graph structure.}
\label{W2-MST-comparison}
\end{figure}
\end{example}

\subsubsection{Topology-induced transport patterns: a dumbbell graph}
\noindent

Example~\ref{the independence of Tree} highlighted that the cost is not impacted by the choice of a tree.  Below, 
instead, we highlight that the \emph{transport dynamics} are strongly shaped by the connectivity of the underlying graph.

\begin{example}[Dumbbell graph: connectivity-induced transport bottleneck]
\label{ex:dumbbell}
We consider a \emph{dumbbell graph} $G=(V,E,\Omega)$ formed by connecting two densely connected subgraphs
(a \emph{left} cluster and a \emph{right} cluster) through a small set of \emph{bridge edges}, as shown in Figure~\ref{the graph struture for examples}(a).
In our construction, the two clusters are linked by the single bridge edge $(4,5)$.
The edge weights $\Omega=(\omega_{ij})_{(i,j)\in E}$ are chosen 
 to be uniform, with $\omega_{ij}\equiv 1$. The initial and terminal densities are generated randomly and then normalized to unit mass.

\begin{figure}[htbp]
    \centering
    \subfloat[Evolution of nodal densities $\rho_i(t)$.\label{fig:density}]{
        \includegraphics[width=0.435\textwidth]{ 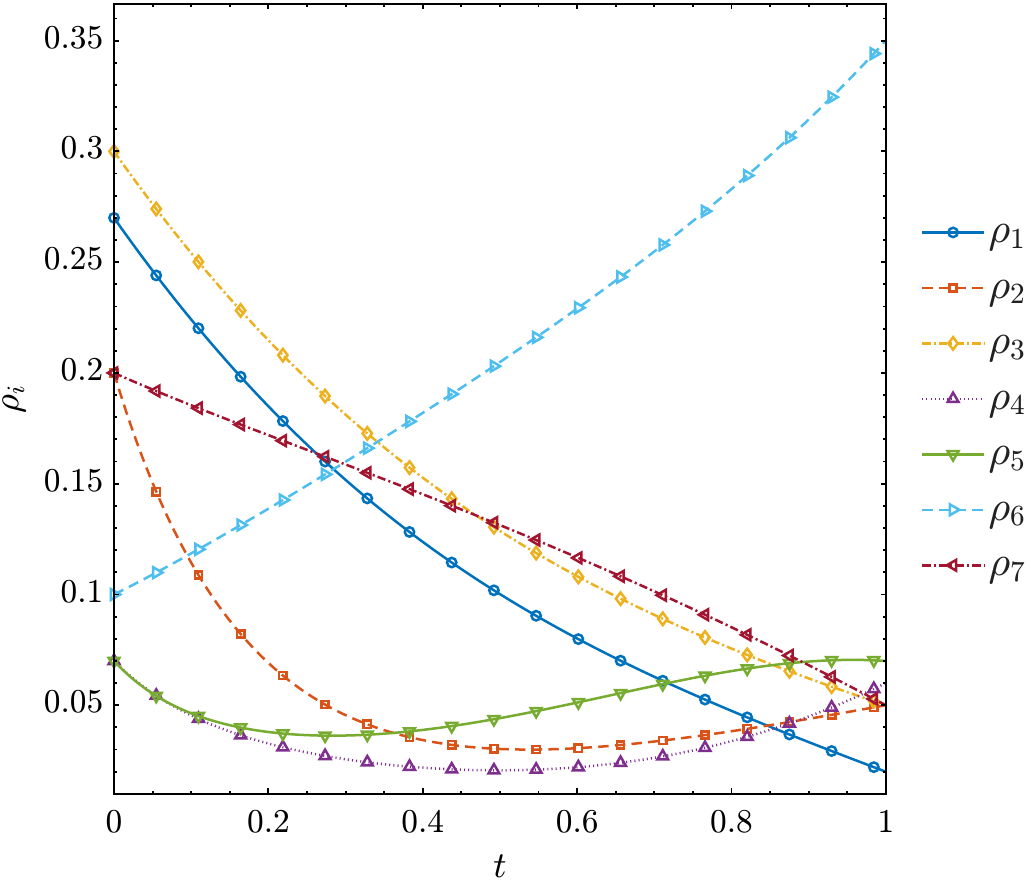}
    }
    \hfill
    \subfloat[Evolution of edge velocities $v_{i,j}(t)$.\label{fig:velocity}]{
        \includegraphics[width=0.426\textwidth]{ 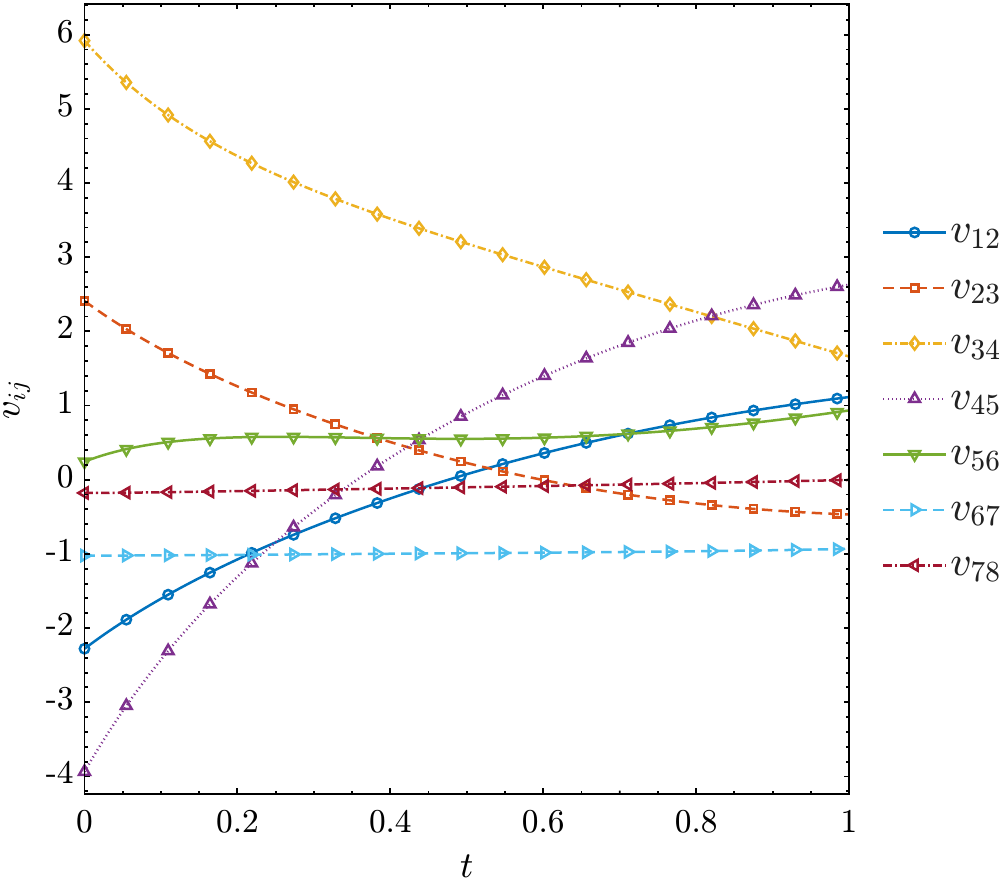}
    }
    \caption{Example~\ref{ex:dumbbell}. Dumbbell-graph experiment with \(M=128\) and \(|V|=N=8\).}
    \label{fig:numerical_results}
\end{figure}
\end{example}

Figure~\ref{fig:numerical_results} reports the numerical solution on the dumbbell graph with temporal discretization parameter $M = 128$.
As expected, the bridge edge \((4,5)\) acts as a bottleneck and carries most of the inter-cluster flux, since the velocity along this edge remains relatively large throughout the transport process.
This concentration of flow typically results in larger temporal variability in the corresponding edge velocity. 
The nodal densities $\rho_i(t)$ remain strictly
positive throughout the simulation, indicating that the scheme produces a stable and physically
consistent transport trajectory. Overall, the experiment highlights that the \emph{transport pattern} is
highly sensitive to the graph connectivity: bottleneck edges concentrate cross-cluster transport and
amplify velocity fluctuations, while the densities remain regular.

\subsection{Advanced applications}
The experiments above establish two key points: (i) the computed transport cost is robust with respect to the choice of spanning tree,
and (ii) the structure of the graph can strongly influence the resulting transport dynamics, as illustrated by the bottleneck behavior on the dumbbell graph. 

Building on these algorithmic and modeling properties, we now demonstrate how the proposed dynamical OT framework can serve as a versatile tool beyond the forward computation of a single transport distance. In particular, we present two applications: (1) an inverse task in which flux patterns are used to prune edges and infer an effective network topology, and (2) an interpretation of the same OT dynamics as an interaction model for opinion evolution on social networks.

\subsubsection{Inverse OT for time-dependent topology inference}

In this section, we study an inverse problem motivated by information diffusion on social networks. Given the transport dynamics induced by the dynamical OT model, our objective is to infer a sparse, time-varying effective communication topology that governs the redistribution process, based on the observed initial and final states.

We begin from an overparameterized hypothesis graph, taken to be the complete graph $K_{|V|}$, so that every pair of nodes is initially allowed to exchange information. 
This choice reflects the absence of prior knowledge about the actual transmission channels: any two users may potentially communicate, but only a subset of these candidate edges will be effectively activated by the transport dynamics.

For a message spreading over a social network, we interpret the node variable $\rho_i(t)$  as the normalized message-related activity or influence intensity of that user at time $t$. 
Accordingly, the initial and terminal distributions $\mu$ and $\nu$ describe the allocation of message-related activity over the network at the beginning and at the end of the diffusion process. 
Under this interpretation, a decrease in $\rho_i(t)$ indicates that the relative participation of that node in the global diffusion process has decreased.

Solving the dynamical OT on the hypothesis graph yields a time-dependent flux field $v_{i,j}(t)$, which describes how message-related activity is transferred across the network in a cost-efficient manner. 
These fluxes can be interpreted as revealing latent communication pathways induced by the observed transition from $\mu$ to $\nu$.

To extract an effective topology from the dense hypothesis graph, we apply a thresholding rule to the edge fluxes at each time step. 
Given a tolerance parameter $\varepsilon>0$, we define the time-dependent effective edge set by
\begin{equation}\label{eq:effective_edge_set_time}
(i,j)\in E_{\mathrm{eff}}(t)
\quad\Longleftrightarrow\quad
|v_{i,j}(t)|>\varepsilon.
\end{equation}
Edges whose flux magnitude does not exceed the threshold at time $t$ are regarded as inactive at that instant. 
In this way, the complete graph is reduced, at each time step, to a sparse effective communication network supported by the OT dynamics.

Therefore, the inverse problem considered here is  to infer a time-dependent effective transmission structure that explains how the observed message-related activity is redistributed across the network.

We next present a numerical experiment to illustrate the behavior of this inverse-recovery heuristic.

\begin{example}[Topology recovery from flux sparsification]
\label{Topology recovery from flux sparsification}
We solve the dynamical OT problem on the complete graph $K_{|V|}$.  
Although the optimization is performed on a fully connected graph, a spanning tree is selected beforehand for gauge fixing; this backbone is shown in Figure~\ref{fig:evolution_edge}(a).

After computing the edge velocities $v_{i,j}(t)$, we then declare an edge $(i,j)$ to be active if this quantity exceeds a prescribed threshold \eqref{eq:effective_edge_set_time}.  
The resulting inferred connectivity is shown in Figure~\ref{fig:evolution_edge}(b), where edge colors indicate the value of $\|{v}_{i,j}\| $.

Figure~\ref{fig:evolution_edge} illustrates the topology recovery procedure used to recover the underlying connectivity of the graph. The figure as a whole shows how the graph’s effective topology evolves over time.
Panel (a) shows the spanning tree selected from the complete graph, which is used as the backbone for gauge fixing.
For each edge $(i,j)$, we compute the edge velocity $v_{i,j}$. Panel~(b) shows the intermediate connectivity state, while panel~(c) displays the effective connectivity obtained by retaining only those edges whose the norm of velocity exceeds the prescribed threshold. The edge colors indicate the magnitude of $v_{i,j}$.
\end{example}

\begin{figure}[ht]
    \centering
    \includegraphics[width=0.7\textwidth]{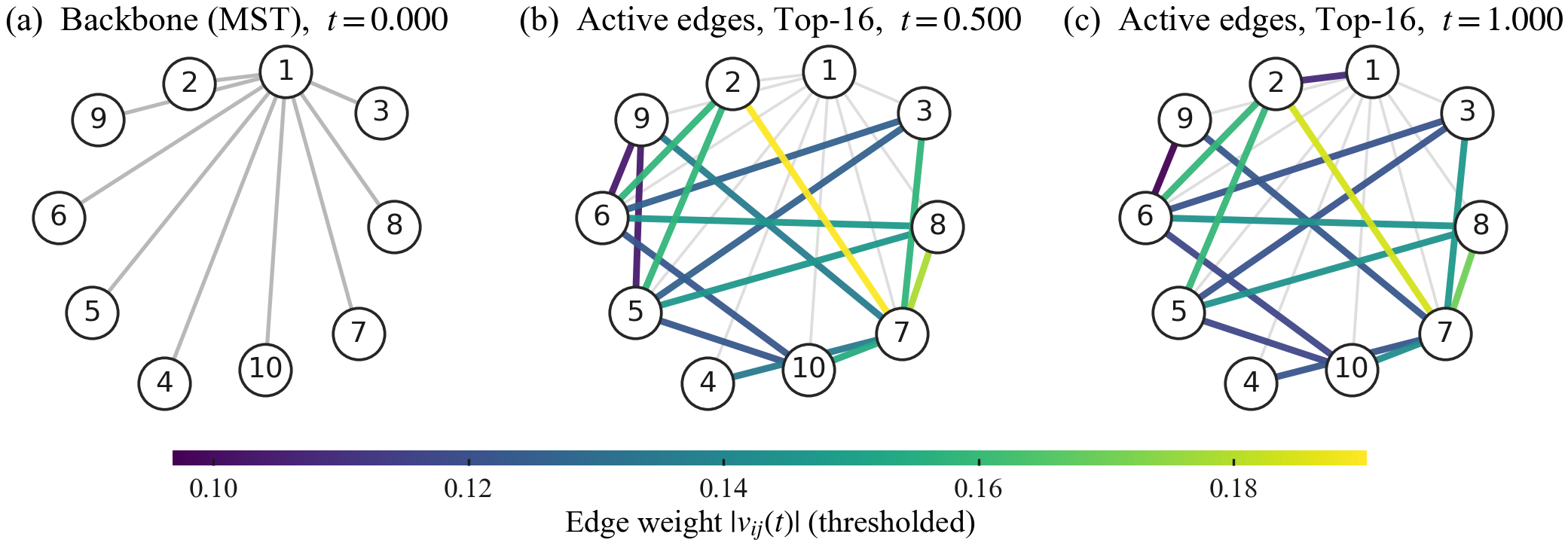}
    \caption{Example \ref{Topology recovery from flux sparsification}.  Topology recovery from the average edge velocity with temporal
discretization parameter $M = 128$.}
    \label{fig:evolution_edge}
\end{figure}

\subsubsection{Opinion dynamics on social networks}
In the previous sections, we assumed that the underlying graph $G=(V,E,\Omega)$ is given and
formulated the dynamical OT problem directly on $G$.
Here we adopt an equivalent but more modeling-oriented viewpoint: the network connectivity
is specified by an adjacency matrix, so that the same graph-based OT dynamics can be read
as an interaction model on a social network.

In this interpretation, each vertex $i\in V$ represents an agent, and the nodal variable
$\rho_i(t)\ge 0$ denotes the agent's normalized belief (or support) level for a fixed opinion.
We enforce the normalization $\sum_{i\in V}\rho_i(t)=1$; hence $\rho_i(t)\in[0,1]$, and in
particular $\rho_i(t)=1/|V|$ corresponds to the uniform consensus state.
The edge field $v_{i,j}(t)$ describes the interaction-driven redistribution of belief mass
along the network links, so the adjacency structure determines which agents can directly
influence each other.

\begin{example}[Relaxation to consensus on a small network]
\label{ex:social-relax}
We examine the time evolution of $\rho(t)$ on a network with $|V| = N = 10$ nodes, defined by a binary adjacency matrix $A \in \{0,1\}^{N \times N}$. The corresponding spanning tree is constructed using Kruskal’s algorithm. The initial distribution $\mu$ is drawn randomly (seen in
Figure~\ref{general}), while the target distribution is the uniform state
$\nu \equiv 1/N$, representing a consensus configuration.
The dynamics are governed by~\eqref{equation expression1}. 
We solve the resulting system using Algorithm~\ref{alg:OT-solver} with temporal discretization parameter $M = 256$, and the corresponding numerical results are presented in Figure~\ref{general}. Figure~\ref{general} illustrates a relaxation-to-consensus process: starting from a heterogeneous initial state, different users exhibit varying degrees of support or opposition (as encoded by the nodal density values \(\rho_i\)). As time evolves, these differences are gradually smoothed out and the system converges to a consensus configuration, represented by the uniform distribution \(\nu_i \equiv 1/|V|\). In this sense, \(\rho(t)\) directly visualizes the evolution of users' opinions across the network.

At the same time, the edge velocity field \(v(t)\) offers a complementary perspective on how consensus is formed. The magnitude of \(v_{i,j}\) indicates the strength of interaction or influence transmitted along the edge \((i,j)\): larger values highlight the neighboring users that most strongly affect the evolution of a given node's state. Together, the coupled dynamics of \(\rho\) and \(v\) provide an intuitive and quantitative description of opinion evolution on social media, capturing both the macroscopic trend toward consensus and the underlying influence pathways that drive this evolution.

\begin{figure}[ht]
    \centering
    \includegraphics[width=0.8\textwidth]{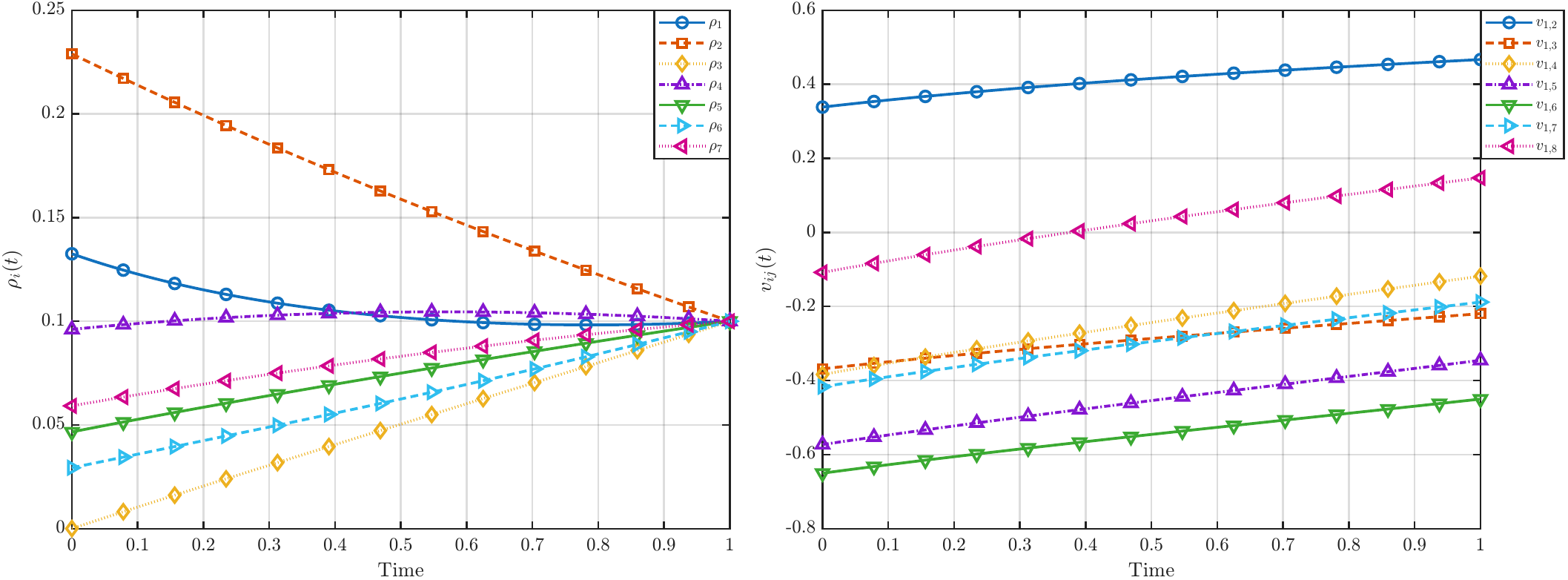}
    \caption{Evolution of the nodal density $\rho$ and edge velocity $v$ from a random initial state to the uniform target state in Example~\ref{ex:social-relax}, with $M = 256$ time steps.}
    \label{general}
\end{figure}
\end{example}

\subsection{A comparison test}\label{comparison}
To assess computational efficiency, we compare the wall-clock runtime of the multiple shooting method and the current Newton-based method under identical experimental settings. All experiments were performed in \textsc{Matlab} R2023b on macOS using a MacBook Air equipped with an Apple M3 chip (8-core CPU and 10-core GPU).

 For the comparison on the uniform lattice graph \(G=(V,E,\Omega)\) (see Fig.~\ref{the graph struture for examples}(b)), viewed as a discretization of the spatial domain \(\mathcal{O}=[-1,3]\), we take \(|V|=64\) and assign the nodal weight
$\Omega=\{\omega_{i,i+1}\}_{1 \leq i \leq N} = \frac{1}{16^2}$. 
Periodicity is enforced by identifying $x_{N}\equiv x_0$ and including the wrap-around edge $(N-1,0)\in E$. The initial and terminal densities are chosen as Gaussian-type profiles
\begin{equation}\label{eq:1d_gaussian}
\hat{{\mu}}(x)=\exp\!\left(-a_{\mu}(x-b_{\mu})^2\right)+r_{\mu},\qquad
\hat{{\nu}}(x)=\exp\!\left(-a_{\nu}(x-b_{\nu})^2\right)+r_{\nu},
\end{equation} with parameters
$a_{\mu}=a_{\nu}=15$, $b_{\mu}=1.4$, $b_{\nu}=1.7$, and $r_{\mu}=r_{\nu}=10^{-4}$.

For the two-dimensional experiment, we consider the uniform lattice graph
\(G=(V,E,\Omega)\) (see Fig.~\ref{the graph struture for examples}(c)) associated with the spatial domain \(\mathcal{O}=[-1,3]^2\),
with \(|V|=64\times 64\). The edge set \(E\) consists of all horizontal and
vertical nearest-neighbor pairs, and we assign the uniform edge weight
\(\Omega=\{\omega_{ij}\}_{(i,j)\in E}=\frac{1}{16^2}\). The initial and terminal
densities are chosen as Gaussian-type profiles
\begin{equation}
\label{eq:mu_mixture}
\begin{split}
\hat{\mu}(x)
=\, & w_{\mu}\exp\!\left(
-a_{\mu}(x_1-b_{\mu})^2
-c_{\mu}(x_2-d_{\mu})^2
\right) + \varepsilon,
\end{split}
\end{equation}
and
\begin{equation}
\label{eq:nu_mixture}
\begin{split}
\hat{\nu}(x)
=\, & \exp\!\left(
-a_{\nu}(x_1-b_{\nu})^2
-c_{\nu}(x_2-d_{\nu})^2
\right) + \varepsilon.
\end{split}
\end{equation}
The parameters
$a_{\mu}$, $c_{\mu}$, $a_{\nu}$ and $c_{\nu} > 0$
determine the concentration of the Gaussian components in the $x_1$- and $x_2$-directions, respectively, while
$\bigl(b_{\mu}, d_{\mu}\bigr)$ and
$\bigl(b_{\nu}, d_{\nu}\bigr)$ denote the corresponding centers in the initial and terminal profiles. In this example, we choose $a_{\mu}=a_{\nu}=10$, $c_{\mu}=c_{\nu}=10$,
$(b_{\mu}, d_{\mu}) = (0.5,1.5)$,  $(b_{\nu}, d_{\nu}) = (1.5,1.3)$ and $\varepsilon =0.0001 $. Table \ref{tab:cpu_1d2d} shows the CPU times of 
multiple shooting(M.S.) and Newton's method(N.M.) on the graph.

\begin{table}[htbp]
\centering
\caption{CPU times for 1D and 2D experiments for different time steps $M$.}
\label{tab:cpu_1d2d}
\begin{tabular*}{\textwidth}{@{\extracolsep{\fill}} l
                S[table-format=3.2]
                S[table-format=3.2]
                S[table-format=3.2]
                S[table-format=3.2] @{}}
\toprule
& \multicolumn{2}{c}{\textbf{1D}} & \multicolumn{2}{c}{\textbf{2D}} \\
\cmidrule(lr){2-3}\cmidrule(lr){4-5}
\textbf{Time Step} 
& {\textbf{M.S.}} & {\textbf{N.M.}}
& {\textbf{M.S.}} & {\textbf{N.M.}} \\
\midrule
$M = 16$  & 2.73 & 0.23 & 333.68 & 76.58 \\
$M = 32$  & 21.98  & 7.63   & 357.85 & 146.34 \\
$M = 64$  & 92.78  & 28.12  & 498.16 & 411.05 \\
$M = 128$ & 368.61 & 238.91 & 913.24 & 823.43 \\
\bottomrule
                                                                                                                                                                                                                                                                                                                                                                                                                                                                                                                                                                                                                                                                                                                                                                                                                                                                                                                                                                                                                                                                                                                                                                                                                                                                                                                                                                                                                                                                                                                                                                                                                                                                                                                                                                                                                                                                                                                                                                                                                                                                                                                                                                                                                                                                                                                                                                                                                                                                                                                                                                                                                                                                                                                                                                                                                                                                                                                                                                                                                                                                                                                                                                                                                                                                                                                                                                                                                                                                                                                                                                                                                                                                                                                                                                                                                                                                                                                                                                                                                                                                                                                                                                                                                                                                                                                                                                                                                                                                                                                                                                                                                                                                                                                                                                                                                                                                                                                                                                                                                                                                                                                                                                                                                                                                                                                                                                                                                                                                                                                                                                                                                                                                                                                                                                                                                                                                                                                                                                                                                                                                                                                                                                                                                                                                                                                                                                                                                                                                                                                                                                                                                                                                                                                                                                                                                                                                                                                                                                                                                                                                                                                                                                                                                                                                                                                                                                                                                                                                                                                                                                                                                                                                                                                                                                                                                                                                                                                                                                                                                                                                                                                                                                                                                                                                                                                                                                                                                                                                                                                                                                                                                                                                                                                                                                                                                                                                                                                                                                                                                                                                                                                                                                                                                                                                                                                                                                                                                                                                                                                                                                                                                                                                                                                                                                                                                                                                                                                                                                                                                                                                                                                                                                                                                                                                                                                                                                                                                                                                                                                                                                                                                                                                                                                                                                                                                                                                                                                                                                                                                                                                                                                                                                                                                                                                                                                                                                                                                                                                                                                                                                                                                                                                                                                                                                                                                                                                                                                                                                                                                                                                                                                                                                                                                                                                                                                                                                                                                                                                                                                                                                                                                                                                                                                                                                                                                                                                                                                                                                                                                                                                                                                                                                                                                                                                                                                                                                                                                                                                                                                                                                                                                                                                                                                                                                                                                                                                                                                                                                                                                                                                                                                                                                                                                                                                                                                                                                                                                                                                                                                                                                                                                                                                                                                                                                                                                                                                                                                                                                                                                                                                                                                                                                                                                                                                                                                                                                                                                                                                                                                                                                                                                                                                                                                                                                                                                                                                                                                                                                                                                                                                                                                                                                                                                                                                                                                                                                                                                                                                                                                                                                                                                                                                                                                                                                                                                                                                                                                                                                                                                                                                                                                                                                                                                                                                                                                                                                                                                                                                                                                                                                                                                                                                                                                                                                                                                                                                                                                                                                                                                                                                                                                                                                                                                                                                                                                                                                                                                                                                                                                                                                                                                                                                                                                                                                                                                                                                                                                                                                                                                                                                                                                                                                                                                                                                                                                                                                                                                                                                                                                                                                                                                                                                                                                                                                                                                                                                                                                                                                                                                                                                                                                                                                                                                                                                                                                                                                                                                                                                                                                                                                                                                                                                                                                                                                                                                                                                                                                                                                                                                                                                                                                                                                                                                                                                                                                                                                                                                                                                                                                                                                                                                                                                                                                                                                                                                                                                                                                                                                                                                                                                                                                                                                                                                                                                                                                                                                                                                                                                                                                                                                                                                                                                                                                                                                                                                                                                                                                                                                                                                                                                                                                                                                                                                                                                                                                                                                                                                                                                                                                                                                                                                                                                                                                                                                                                                                                                                                                                                                                                                                                                                                                                                                                                                                                                                                                                                                                                                                                                                                                                                                                                                                                                                                                                                                                                                                                                                                                                                                                                                                                                                                                                                                                                                                                                                                                                                                                                                                                                                                                                                                                                                                                                                                                                                                                                                                                                                                                                                                                                                                                                                                                                                                                                                                                                                                                                                                                                                                                                                                                                                                                                                                                                                                                                                                                                                                                                                                                                                                                                                                                                                                                                                                                                                                                                                                                                                                                                                                                                                                                                                                                                                                                                                                                                                                                                                                                                                                                                                                                                                                                                                                                                                                                                                                                                                                                                                                                                                                                                                                                                                                                                                                                                                                                                                                                                                                                                                                                                                                                                                                                                                                                                                                                                                                                                                                                                                                                                                                                                                                                                                                                                                                                                                                                                                                                                                                                                                                                                                                                                                                                                                                                                                                                                                                                                                                                                                                                                                                                                                                                                                                                                                                                                                                                                                                                                                                                                                                                                                                                                                                                                                                                                                                                                                                                                                                                                                                                                                                                                                                                                                                                                                                                                                                                                                                                                                                                                                                                                                                                                                                                                                                                                                                                                                                                                                                                                                                                                                                                                                                                                                                                                                                                                                                                                                                                                                                                                                                                                                                                                                                                                                                                                                                                                                                                                                                                                                                                                                                                                                                                                                                                                                                                                                                                                                                                                                                                                                                                                                            \end{tabular*}
\end{table}

\subsection{Comparison with other approaches}
It is well understood that there are important situations where the optimal transport of the initial density into the final one is achieved by a map, which is itself the gradient of a smooth function solving the Monge-Amp\'ere equation; e.g., see \cite{santambrogio2015optimal}.
For this reason, several works have tackled directly this map and solve the Monge--Amp\`ere equation; 
e.g., see 
\cite{benamou1999numerical,benamou2014numerical,froese2012transport,Oliker1989,Prins2015,WELLER2016102}.

Motivated by the work \cite{DieciOmarov2023}, we also perform a limited comparison of our approach to direct solution of the Monge-Amp\'ere equation on the model problem below.

\begin{example}\label{MAvsBB}
 We consider the same one-dimensional benchmark 
with initial density $\rho_0(x)=1+\sin(2\pi x)/32$ and target density $\rho_1(x)=1$
defined on a uniform lattice over $\mathcal{O}=[0,1]$.
As already observed in \cite{DieciOmarov2023}, 
directly solving the one-dimension Monge--Amp\`ere equation with a straightforwards 2nd order Runge-Kutta scheme, RK-2, is less expensive than other options.  The drawback of course is that by seeking the map we have a static formulation, and not automatically have the time-evolution for the density and velocity field. As for the dynamic formulations, finite difference method with Newton's iteration and multiple shooting method directly solve the geodesic equation unlike the regularized Benamou-Brenier methods \cite{LIwuchenBB}. And we can see that our finite difference method with Newton's iteration is the more efficient of these three options, see Table \ref{tab:comparison}.\\
\paragraph{Map error}
In Table \ref{tab:comparison}, we also show the error in the computed map for this example.  In fact, in this case the optimal transport map is given by
\begin{equation}
    T^\ast(x) = \left( x + \frac{1}{64\pi} \cos(2\pi x) \right) \pmod 1.
\end{equation}
Moreover, it is also known that the optimal map is given by the identity map plus the optimal initial velocity obtained from the Benamou-Brenier formulation.  Hence, we can compare methods based on directly solving the Monge-Amp\'ere problem to those who adopt the dynamic formulation of Benamou-Brenier.  Again, we report in Table \ref{tab:comparison} on the results of our comparison.
%
%
%
%
\begin{table}[htbp]
\centering
\caption{Performance comparison across grid refinements and solution methods for Example~\ref{MAvsBB}}
\label{tab:comparison}
\small
\begin{tabularx}{\textwidth}{@{}lcccc@{}}
\toprule
\textbf{Method} & $\boldsymbol{\mathrm{d}x}$ & $\boldsymbol{W_2(\rho_0,\rho_1)}$ & \textbf{Map Error} & \textbf{CPU Time} \\
\midrule
1-d Monge--Amp\`ere: & $1/16$  & $3.62\times10^{-3}$ & $1.28\times10^{-4}$ & $1.28\times10^{-3}$ \\
Shooting method with RK-2 & $1/32$  & $3.57\times10^{-3}$ & $3.20\times10^{-4}$ & $1.62\times10^{-3}$ \\
 & $1/64$  & $3.54\times10^{-3}$ & $7.99\times10^{-6}$ & $1.41\times10^{-3}$ \\
 & $1/128$ & $3.53\times10^{-3}$ & $2.00\times10^{-6}$ & $1.76\times10^{-3}$ \\
\midrule
Multiple Shooting & $1/16$  & $3.54\times10^{-3}$ & $9.91\times10^{-4}$ & $1.29$ \\
$(\mathrm{d}t=1/64)$ & $1/32$  & $3.52\times10^{-3}$ & $4.97\times10^{-4}$ & $2.26$ \\
 & $1/64$  & $3.52\times10^{-3}$ & $2.57\times10^{-4}$ & $10.03$ \\
 & $1/128$ & $3.52\times10^{-3}$ & $1.41\times10^{-3}$ & $35.28$ \\
\midrule
Regularized Benamou-Brenier & $1/16$  & $3.43\times10^{-3}$ & $3.10\times10^{-4}$ & $3.44$ \\
$(\mathrm{d}t = 1/21)$ & $1/32$  & $3.47\times10^{-3}$ & $2.46\times10^{-4}$ & $18.87$ \\
$\beta = 10^{-5}$, $\alpha = 0.5$ & $1/64$  & $3.49\times10^{-3}$ & $2.18\times10^{-4}$ & $101.05$ \\
 & $1/128$ & $3.50\times10^{-3}$ & $1.28\times10^{-4}$ & $854.65$ \\
\midrule
Finite Difference Method: & $1/16$  & $3.54\times10^{-3}$ & $9.78\times10^{-4}$ & $1.88\times10^{-1}$ \\
with Newton's iteration & $1/32$  & $3.52\times10^{-3}$ & $4.89\times10^{-4}$ & $1.06\times10^{-1}$ \\
$(\mathrm{d}t = 1/64)$ & $1/64$  & $3.52\times10^{-3}$ & $2.44\times10^{-4}$ & $1.88\times10^{-1}$ \\
 & $1/128$ & $3.52\times10^{-3}$ & $1.22\times10^{-4}$ & $5.50\times10^{-1}$ \\
\bottomrule
\end{tabularx}
\end{table}
As expected, from the numerical results reported in Table~\ref{tab:comparison}, we observe that
the one-dimensional Monge--Amp\`ere approach combined with the RK-2 shooting method
achieves the highest accuracy and the lowest computational cost among all methods tested, insofar as finding the map, but of course provide no information on the density and velocity evolutions.
By contrast, the finite difference method with Newton's iteration and the multiple shooting
method \cite{doi:10.1137/21M142160X} are dynamic approaches that solve the Wasserstein
geodesic equations directly and thus recover the full transport dynamics.
Compared with regularized Benamou--Brenier formulations, these methods avoid entropic
regularization and preserve the underlying geodesic structure of the problem.  Among the dynamic methods considered here, our finite difference technique coupled with Newton's method
is clearly less costly, while achieving comparable accuracy,
than shooting method and the regularized Benamou--Brenier approach.
\end{example}

\section{Conclusion}

We developed a structure-preserving Newton method for dynamical optimal transport on finite weighted graphs in the Benamou–Brenier framework, computing discrete Wasserstein geodesics via a fully coupled nonlinear system from finite-difference discretization of the graph continuity equation and Hamiltonian dynamics. 

Several extensions are suggested by the present study. 
On the discretization side, it would be valuable to develop higher-order time integrators that remain compatible with the graph continuity structure, preserve mass, and retain non-negativity (e.g., via SSP-type updates or positivity limiters), while clarifying how the Jacobian invertibility conditions extend beyond the first-order scheme used here. Another important direction is to extend the present framework beyond finite weighted undirected graphs to more general network settings, including directed and time-dependent graphs, as well as unbalanced optimal transport on graphs. More broadly, these developments could strengthen the role of the proposed solver as a building block for higher-level tasks, such as topology inference, graph Wasserstein barycenters, and gradient-flow-type problems.

\section{Acknowledgment}
We thank Daniyar Omarov for providing us with the code used to solve the Monge--Amp\`ere equation in Example~\ref{MAvsBB}. 
We also acknowledge the use of ChatGPT solely for double checking grammar and typographical errors in the paper.

\bibliographystyle{plain}
\bibliography{reference}

\vspace{9mm}

\appendix

\section{Invertibility of the Jacobian for the upwind scheme}
\label{sec:invertibility-upwind}

In this section, we verify the nonsingularity of the generalized
Jacobian associated with the fully discrete upwind scheme. We follow
the same reduction argument as in Section~\ref{Newton iteration} and
retain the notation introduced there. The only modification is that the
residual mapping $\tilde{\mathbf F}$ associated with the original
discretization is replaced by the residual mapping
$\tilde{\mathbf F}_{\rm up}$, obtained by using the upwind weight in the
discrete equations. All variables, index conventions, and block
structures are inherited from the main text unless otherwise stated.

For the edge $(i,j)$, define the upwind weight by
\[
\theta_{ij}^{\rm up}(\rho,v)
:=
\rho_i\,\mathbf 1_{\{v_{i,j}\ge 0\}}
+
\rho_j\,\mathbf 1_{\{v_{i,j}<0\}},
\]
where $\mathbf 1_A$ denotes the indicator function of the set $A$.
For each fixed velocity configuration, $\theta_{ij}^{\rm up}$ is affine
in the density variables. In particular,
\[
\frac{\partial \theta_{ij}^{\rm up}(\rho,v)}{\partial \rho_i}
=
\mathbf 1_{\{v_{i,j}\ge 0\}},
\qquad
\frac{\partial \theta_{ij}^{\rm up}(\rho,v)}{\partial \rho_j}
=
\mathbf 1_{\{v_{i,j}<0\}}.
\]

With this convention, the residual mapping $\tilde{\mathbf F}_{\rm up}$
has the same block decomposition as $\tilde{\mathbf F}$:
\begin{equation}
\label{the upwind F}
\tilde{\mathbf F}_{\rm up}
=
\begin{pmatrix}
(\tilde{\mathbf F}_{\rm up})_{\hat\rho}^{\rm int}\\
(\tilde{\mathbf F}_{\rm up})_{\hat\rho}^{\rm fin}\\
(\tilde{\mathbf F}_{\rm up})_{\hat v_T}
\end{pmatrix}.
\end{equation}
Here the density and velocity residuals are modified by replacing the
original weight function with the upwind weight, while the remaining
algebraic structure of the discretization is kept unchanged. Since the
upwind weight is piecewise smooth with respect to the velocity
variables, the mapping $\tilde{\mathbf F}_{\rm up}$ is, in general, not
classically differentiable with respect to $\hat v_T$ on the switching
set. Therefore, the Jacobian matrices below are understood as elements,
or blocks of elements, of the Clarke generalized Jacobian \cite{clarke1983optimization}.

We first record the generalized derivatives of the typical nonsmooth
terms appearing in the upwind residual. For the density equation, fix a
time level $t_m$. The velocity-dependent upwind flux is
\[
v_{i,j}^m\theta_{ij}^{\rm up}(\rho^m,v^m)
=
v_{i,j}^m
\left(
\rho_i^m\mathbf 1_{\{v_{i,j}^m\ge 0\}}
+
\rho_j^m\mathbf 1_{\{v_{i,j}^m<0\}}
\right)
=
\rho_i^m (v_{i,j}^m)^+
+
\rho_j^m (v_{i,j}^m)^-,
\]
where
\[
(v_{i,j}^m)^+ := \max\{v_{i,j}^m,0\},
\qquad
(v_{i,j}^m)^- := \min\{v_{i,j}^m,0\}.
\]
For fixed $\rho^m$, this function is locally Lipschitz with respect to
the scalar variable $v_{i,j}^m$. If $v_{i,j}^m\neq 0$, then
\[
\frac{\partial}{\partial v_{i,j}^m}
\left(
v_{i,j}^m\theta_{ij}^{\rm up}(\rho^m,v^m)
\right)
=
\begin{cases}
\rho_i^m, & v_{i,j}^m>0,\\
\rho_j^m, & v_{i,j}^m<0.
\end{cases}
\]
At the switching point $v_{i,j}^m=0$, the Clarke generalized derivative
is
\[
\partial_C^{v_{i,j}^m}
\left(
v_{i,j}^m\theta_{ij}^{\rm up}(\rho^m,v^m)
\right)
=
\operatorname{co}\{\rho_i^m,\rho_j^m\}.
\]
Since $\rho_i^m$ and $\rho_j^m$ are scalar quantities,
\[
\operatorname{co}\{\rho_i^m,\rho_j^m\}
=
[\min\{\rho_i^m,\rho_j^m\},\max\{\rho_i^m,\rho_j^m\}].
\]
Consequently,
\[
\partial_C^{v_{i,j}^m}
\left[
-\sqrt{\omega_{ij}}\,
v_{i,j}^m\theta_{ij}^{\rm up}(\rho^m,v^m)
\right]
=
\begin{cases}
\{-\sqrt{\omega_{ij}}\rho_i^m\}, & v_{i,j}^m>0,\\
\{-\sqrt{\omega_{ij}}\rho_j^m\}, & v_{i,j}^m<0,\\
-\sqrt{\omega_{ij}}\operatorname{co}\{\rho_i^m,\rho_j^m\},
& v_{i,j}^m=0.
\end{cases}
\]

For the velocity equation, we use the antisymmetry convention
$v_{k,j}=-v_{j,k}$. Then a typical upwind term satisfies
\[
(v_{k,j})^2\mathbf 1_{\{v_{j,k}\ge 0\}}
=
(v_{j,k}^+)^2,
\qquad
v_{j,k}^+ := \max\{v_{j,k},0\}.
\]
Thus, by the definition of the Clarke generalized derivative,
\[
\partial_C
\left[
(v_{k,j})^2\mathbf 1_{\{v_{j,k}\ge 0\}}
\right]
=
\begin{cases}
\{2v_{j,k}\}, & v_{j,k}>0,\\
\{0\}, & v_{j,k}\le 0.
\end{cases}
\]
Hence the non-smooth terms in the velocity residual have well-defined
Clarke generalized derivatives with respect to the edge velocity
variables.

Therefore, as in Theorem~\ref{main-thm}, the nonsingularity of the
full generalized Jacobian of $\tilde{\mathbf F}_{\rm up}$ reduces to
the nonsingularity of the two auxiliary blocks appearing in the
original proof. The corresponding entries are modified only through the
replacement of $\theta_{ij}$ by $\theta_{ij}^{\rm up}$ and of the
classical derivatives by the Clarke generalized derivatives described
above.

\begin{theorem}[Upwind scheme]\label{main-thm-upwind}
Let 
$\bigl((\bar{\rho}_i(t))_{i\le N-1}, \bar{v}_T(t)\bigr)$, $t\in[0,1],$
be the unique solution of~\eqref{equation expression1} such that
\[
\bigl((\bar{\rho}_i(t))_{i\le N-1}, \bar{v}_T(t)\bigr)\in 
C^{1}\bigl([0,1];\mathbb{R}^{2N-2}\bigr),
\qquad
\min_{t\in[0,1]}\min_{i=1,\dots,N}\bar{\rho}_i(t)\ge c>0.
\]
Let $t_1=0$, $t_m=m\tau$ for $m\le M+1$, and $t_{M+1}=1$.
$\tilde{\mathbf F}_{\rm up}$ is defined in \eqref{the upwind F}
and the discrete unknown vector is denoted by
$$\bar x:=
\Bigl(
(\bar{\rho}_i(t_m))_{1\le m\le M-1,\ i\le N-1},\,
(\bar v_T(t_m))_{0\le m\le M}
\Bigr)^\top.$$

Assume that $\tilde{\mathbf F}_{\rm up}:\mathbb{R}^{(M-1)(N-1)+(M+1)|E_T|}\to 
\mathbb{R}^{(M-1)(N-1)+(M+1)|E_T|}$ is \emph{locally Lipschitz} and \emph{semismooth}
in a neighborhood of $\bar{x}$.
Moreover, assume that the Clarke generalized Jacobian is uniformly nonsingular at $\bar x$, in the sense that
\begin{equation}\label{eq:clarke-nonsing}
\inf_{J\in \partial_C(\tilde{\mathbf F}_{\rm up})^{\rm fin}_{\hat\rho}(\bar x)}
\sigma_{\min}(\hat J_{\mathcal V}) \ge \gamma >0 .
\end{equation}
Equivalently, every matrix
$\hat J_{\mathcal V}\in
\partial_C(\tilde{\mathbf F}_{\rm up})^{\rm fin}_{\hat\rho}(x)
\Big|_{x=(G(\Gamma(\mathcal V),\mathcal V)^\top,\,
(\Gamma(\mathcal V),\mathcal V)^\top)}$
is invertible for $\mathcal V$ in a sufficiently small neighbourhood of $\bar{\mathcal V}$.

Let $x^{(0)}$ be an initial guess satisfying $\lvert x^{(0)}-\bar{x}\rvert=\mathcal{O}(\eta)$
for $\eta>0$ sufficiently small. Consider the semismooth Newton iteration:
choose any $J^{(k)}\in \partial_C \tilde{\mathbf F}_{\rm up}(x^{(k)})$ and compute $J^{(k)} d^{(k)} = -\tilde{\mathbf F}_{\rm up}(x^{(k)})$, $x^{(k+1)} = x^{(k)} + d^{(k)}.$
Then the iteration is well-defined for all $k$ sufficiently large and converges \emph{locally superlinearly} to $\bar{x}$.
If, in addition, $\tilde{\mathbf F}_{\rm up}$ is \emph{strongly semismooth} at $\bar{x}$,
then the convergence is \emph{quadratic}.
\end{theorem}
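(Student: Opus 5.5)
The plan is to reduce the statement to the standard local convergence theorem for semismooth Newton methods (Qi--Sun). That theorem says: if $F$ is locally Lipschitz and semismooth at a zero $\bar x$, and every element of $\partial_C F(\bar x)$ is nonsingular, then the iteration is well defined near $\bar x$ and converges superlinearly. If $F$ is in addition strongly semismooth, the convergence is quadratic. Local Lipschitz continuity and (strong) semismoothness of $\tilde{\mathbf F}_{\rm up}$ are assumed in the statement. They are also natural, because the only nonsmooth pieces are $v\mapsto v^\pm$ and $v\mapsto (v^+)^2$, which are piecewise smooth, hence strongly semismooth, and compositions preserve this. So the real work is to establish that every $J\in\partial_C\tilde{\mathbf F}_{\rm up}(\bar x)$ is nonsingular, with a uniform bound on $\|J^{-1}\|$.

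\textbf{Step 1: block structure.} I will show that any $J\in\partial_C\tilde{\mathbf F}_{\rm up}(x)$ has the same block structure as in the proof of Theorem~\ref{main-thm}. The density update with the left-rectangle rule gives $\partial F_i^m/\partial\rho_i^{m+1}=1$. The upwind weights enter only through the $m$-th level data. Hence the block $A$ is still the identity, or block lower-bidiagonal with identity diagonal, and is invertible independently of which Clarke element is chosen. Likewise the velocity equations make $D_0$ block lower triangular with identity diagonal blocks. The generalized derivatives computed before the theorem affect only off-diagonal blocks: $B$, $C$, and the parts of $D_0$ below the diagonal. Here I use the inclusion $\partial_C F\subseteq$ the product of the row-wise Clarke Jacobians. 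This inclusion is what lets me treat each row's switching set separately.

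\textbf{Step 2: Schur complement reduction.} With $A$ and $D_0$ invertible, I repeat the two Schur complement steps of Theorem~\ref{main-thm} purely algebraically. This gives
\[
\det J=\det(A)\det(D_0)\det\bigl(A_0-B_0D_0^{-1}C_0\bigr),
\]
where the blocks are now built from the chosen Clarke element. The last factor is exactly a matrix $\hat J_{\mathcal V}$ of the type appearing in \eqref{eq:clarke-nonsing}. By that hypothesis, $\sigma_{\min}(\hat J_{\mathcal V})\ge\gamma$, so $J$ is nonsingular. I will not invoke the implicit function theorem for the nonsmooth map; the Schur complement identity is only linear algebra on a fixed matrix, so it is not needed.

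\textbf{Step 3: uniform inverse bound on a neighbourhood.} I need $\|J^{-1}\|\le K$ for all $J\in\partial_C\tilde{\mathbf F}_{\rm up}(x)$ with $x$ near $\bar x$. Two facts give this. First, $\partial_C$ is upper semicontinuous with compact values. Second, $\|A^{-1}\|$ and $\|D_0^{-1}\|$ are bounded by constants depending only on $M$ and the local bounds of the entries. Combined with the $\gamma$-bound on the Schur complement, these yield a uniform bound on the full inverse. A standard perturbation argument then extends the bound to a small neighbourhood.

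\textbf{Step 4: convergence.} Semismoothness gives
\[
\|\tilde{\mathbf F}_{\rm up}(x)-\tilde{\mathbf F}_{\rm up}(\bar x)-J(x-\bar x)\|=o(\|x-\bar x\|),
\]
and strong semismoothness gives $O(\|x-\bar x\|^2)$ in place of $o(\|x-\bar x\|)$. Multiplying by $J^{-1}$ yields
\[
\|x^{(k+1)}-\bar x\|\le K\,o(\|x^{(k)}-\bar x\|),
\]
respectively $K\,O(\|x^{(k)}-\bar x\|^2)$. Induction keeps the iterates in the ball of radius $\eta$. Positivity of $\rho$ in this ball, which is needed for well-definedness, follows from $\bar\rho\ge c$ and $\eta$ small, exactly as before.

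\textbf{Main obstacle.} I expect the main difficulty to be Step~1/Step~3: justifying that every Clarke element of the full map decomposes with identity diagonal blocks. The difficulty is that $\partial_C$ of a vector map is generally only contained in the product of row-wise subdifferentials, not equal to it. My plan is to prove nonsingularity for the larger product set, which suffices, and to interpret \eqref{eq:clarke-nonsing} as a hypothesis over that larger set.
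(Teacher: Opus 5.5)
Your proposal is correct and follows essentially the paper's argument. For every Clarke element, $A$ and $D_0$ are block lower triangular with identity diagonal blocks. The two Schur-complement reductions of Theorem~\ref{main-thm} then reduce nonsingularity to the hypothesis on $\hat J_{\mathcal V}$, and the standard semismooth Newton theorem finishes the proof. You make explicit what the paper leaves implicit: the Schur complements are purely algebraic, so no implicit function theorem is needed for a nonsmooth map, and the uniform inverse bound comes from upper semicontinuity of $\partial_C$.

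The obstacle you flag is milder than you fear. The identity diagonal blocks and the zero pattern hold at every point of differentiability and survive limits and convex hulls, so they hold for every element of $\partial_C\tilde{\mathbf F}_{\rm up}(x)$ itself. The detour through the row-wise product set, with its stronger reading of \eqref{eq:clarke-nonsing}, is therefore not needed.
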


\begin{proof}
 The result follows from Theorem~\ref{main-thm} once we verify the
uniform nonsingularity of the generalized Jacobian associated with the
upwind discretization. 

Applying the upwind weight functions $\theta^{\mathrm{up}}_{ij}$ and left rectangular formula to \eqref{equation expression1}, we note that, for each fixed velocity configuration, the upwind weight
function $\theta^{\mathrm{up}}_{ij}$ is affine in the density variables
$\hat\rho$, and hence is classically differentiable with respect to
$\hat\rho$. Consequently, the $\hat\rho$-block of every element of the
Clarke generalized Jacobian of $\tilde{\mathbf F}_{\rm up}$ coincides
with the corresponding classical Jacobian block.
From the definitions of
\[
A := \frac{\partial (\tilde{\mathbf F}_{\rm up})_{\hat{\rho}}^{\rm int}}{\partial \hat{\rho}}
\in \mathbb{R}^{(M-1)(N-1)\times (M-1)(N-1)}, \hat{\rho} := (\rho^{m}_i)_{2\le m \le M,i\le N-1},
\]
and
\[
D_0 \in 
\partial^{\widetilde{\mathcal V}}_C
\left[
(\tilde{\mathbf F}_{\rm up})_{\hat v_T}
\right]
\Big|_{\big({\mathcal H}(\hat v_T)^\top,\hat v_T^\top\big)^\top}
\subset
\mathbb{R}^{(N-1)M\times (N-1)M},
\]
where $\partial_C^{\widetilde{\mathcal V}}$ denotes the Clarke
generalized Jacobian with respect to the variables
$\widetilde{\mathcal V}$.
It follows that \(A\) is lower triangular with identity blocks on the diagonal. Therefore, \(A\) is invertible. Based on the Clark derivatives, we derive the structure of the matrix $D_0$. 
For a fixed time index $m$, consider the residual corresponding to the
discretized velocity equation at this time step. The diagonal block of the
Jacobian is obtained by differentiating this residual with respect to the
velocity unknown, namely the unknown at time level
$t_{m+1}$ under our indexing convention. Thus, this derivative contributes to
the diagonal block of the Jacobian. Consequently, $D_0$ is lower
triangular with identity blocks on the diagonal, and thus $D_0$ is invertible as well.

Consequently, all the assumptions of Theorem~\ref{main-thm} are satisfied for the upwind discretization, and the conclusion of the theorem also holds for the upwind scheme. 
\end{proof}
\end{document}